\documentclass[11 pt]{article}
\usepackage{amssymb,amsfonts,amsmath,latexsym,verbatim,amscd,amsthm,graphicx,epsfig,color,hyperref
%mathabx
}

\setlength{\oddsidemargin}{.25in}
\setlength{\evensidemargin}{.25in}
\setlength{\textwidth}{6in}
\setlength{\topmargin}{0in}
\setlength{\textheight}{8.5in}

\newcommand{\be}{\begin{equation}}
\newcommand{\ee}{\end{equation}}
\newcommand{\bes}{\begin{equation*}}
\newcommand{\ees}{\end{equation*}}

\newcommand{\rr}{{\mathbb R}}
\newcommand{\ms}{{\mathbb S}}

\newcommand{\hh}{{\mathbb H}}

 \newtheorem{teo}{Theorem}[section]
 
 \newtheorem{lemma}[teo]{Lemma}
 \newtheorem{prop}[teo]{Proposition}

 \newtheorem{rem}[teo]{Remark}
\numberwithin{equation}{section}

 \newcommand{\calL}{{\mathcal L}}

\newcommand{\lp}{\left(}
\newcommand{\rp}{\right)}

\newcommand{\divergence}{\text{div}}

\begin{document}

\title{\bf Some constructions for the fractional \\Laplacian on noncompact manifolds}

% \author[V. Banica]{Valeria Banica}
%\address[V. Banica]{Laboratoire Analyse et probabilit\'es (EA 2172)\\D\'eptartement de Math\'ematiques\\ Universit\'e d'Evry, 23 Bd. de France, 91037 Evry\\ France, Valeria.Banica@univ-evry.fr}
% \author[M.d.M. Gonz\'alez]{Mar\'ia del Mar Gonz\'alez}
%\address[M.d.M. Gonz\'alez]{Universitat Polit\`ecnica de Catalunya, ETSEIB-MA1, Av. Diagonal 647, 08028 Barcelona, Spain, mar.gonzalez@upc.edu}
% \author[M. S\'aez]{Mariel S\'aez} \address[M. S\'aez]{Pontificia Universidad Cat\'olica de Chile. Santiago Avda. Vicu\~na Mackenna 4860, Macul, 6904441 Santiago, Chile, mariel@mat.puc.cl}  \footnote{Partially supported by the French ANR project R.A.S.                        (ANR-08-JCJC-0124-01).Supported by grants MTM2011-27739-C04-01 (Spain), and 2009SGR345 (Catalunya).Supported by grants  Fondecyt regular 1110048  and proyecto Anillo ACT-125, CAPDE, P. Universidad Cat\'olica de Chile}
 \author{V. Banica,\footnote{Laboratoire Analyse et probabilit\'es (EA
2172), Universit\'e d'Evry, 23 Bd. de
France, 91037 Evry, France, valeria.banica@univ-evry.fr. Partially supported by the French ANR projects R.A.S.
ANR-08-JCJC-0124-01 and SchEq ANR-12-JS01-0005-01.}\\M.d.M. Gonz\'alez,\footnote{Universitat Polit\`ecnica de Catalunya, ETSEIB-MA1, Av. Diagonal 647, 08028 Barcelona, Spain, mar.gonzalez@upc.edu. Supported by grants MTM2011-27739-C04-01 (Spain), and 2009SGR345 (Catalunya).}\\
M. S\'aez,
\footnote{Pontificia Universidad Cat\'olica de Chile. Santiago
Avda. Vicu\~na Mackenna 4860, Macul, 6904441 Santiago, Chile, mariel@mat.puc.cl. Supported by
 grants  Fondecyt regular 1110048  and proyecto Anillo ACT-125, CAPDE, P. Universidad
 Cat\'olica de Chile}}

%\date{}
\maketitle

\abstract{We give a definition of the fractional Laplacian on some noncompact manifolds, through an extension problem introduced by Caffarelli-Silvestre. While this definition in the compact case is straightforward, in the noncompact setting one needs to have a precise control of the behavior of the metric at infinity and geometry plays a crucial role. First we give explicit calculations in the hyperbolic space, including a formula for the kernel and a trace Sobolev inequality. Then we consider more general noncompact manifolds, where the problem reduces to obtain suitable upper bounds for the heat kernel. }
\tableofcontents

\section{Introduction and statement of the results}

There are extensive works involving fractional order operators. In particular, nonlinear or free boundary problems involving fractional powers of the Laplacian $(-\Delta)^\gamma$ appear naturally in applications (see for instance \cite{Valdinoci:long-jump,StingaTorrea} and the references therein). As pseudodifferential operators, the classical definitions involve functional analysis and singular integrals. They are nonlocal objects, which means that a priori estimates and maximum principles are not easy to obtain. However, in the Euclidean case, Caffarelli and Silvestre have developed in \cite{CS} an equivalent definition using an extension problem in one more dimension in terms of a degenerate elliptic equation with $\mathcal A_2$ weight, of the type studied by Fabes-Jerison-Kenig-Serapioni \cite{FKS,FJK}.

On the other hand, from the geometry side there is the work of Graham-Zworski \cite{Graham-Zworski:scattering-matrix} that studies a general class of conformally covariant operators $P_\gamma$ defined on a compact manifold $M^n$. These operators are defined through scattering theory \cite{Mazzeo-Melrose:meromorphic-extension} when $M$ is  the boundary $M^n$ of a conformally compact Einstein manifold. In \cite{Chang-Gonzalez} both the geometrical and the PDE points of view were  reconciled and, in particular, the fractional Laplacian on the sphere $\ms^n$ (or $\mathbb R^n$ through stereographic projection) is defined from scattering theory in the Poincar\'e ball.

It is possible then to formulate fractional Yamabe-type problems for $P_\gamma$, as considered in \cite{Gonzalez-Qing}, where the main ingredients needed in the proof are a Hopf's maximum principle, elliptic estimates and a sharp Sobolev trace inequality. These are shown by means of the extension formulation of Caffarelli-Silvestre its generalization on manifolds by \cite{Chang-Gonzalez}.

However, if $M^n$ is a noncompact manifold with a Riemannian metric $g_M$, these methods are not available in general since it is not clear how to define fractional order operators in the noncompact setting. One can give a reasonable definition when $M$ is an open dense set in a compact manifold $\hat M$ and the metric $g_M$ is conformally related to a smooth metric $\hat g$ on $\hat M$. Namely, we can define $P_\gamma$ by demanding that a conformally covariant relationship holds. Note, however, that this is not as simple as it first appears. In \cite{Gonzalez-Mazzeo-Sire} singular fractional Yamabe problems were considered in the particular case that $M=S^n\backslash \Lambda$, where the singular set $\Lambda$ is a smooth $k$-dimensional submanifold and $g_M$ a complete metric  with controlled growth near the singular set. Not much is known in the general noncompact case.

In the present paper we try to formulate an extension problem for the fractional Laplacian $(-\Delta_M)^\gamma$ on hyperbolic space and on some other noncompact manifolds. It is a very interesting open question to set up a conformally covariant version of the operator (that has the same principal symbol as $(-\Delta_M)^\gamma$); we hope to return to this problem elsewhere.

More precisely, we will give sufficient conditions on the underlying manifold for the following to hold:

\begin{teo}\label{thm-extension}
Let $M^n$ be a $n$-dimensional complete, non-compact manifold with a Riemannian metric $g_M$ satisfying any of the conditions in Proposition \ref{prop:summary}, which in particular include hyperbolic space $\mathbb H^n$. Let $g$ be the product metric on $M\times \rr_+$ given by $g=g_{M}+dy^2$.
Set $\gamma\in(0,1)$ and $a=1-2\gamma$. For any given $f\in H^\gamma(M)$, there exists a unique
solution of the extension problem
\begin{equation}
\left\{\begin{split}
&\textrm{div}_g\,(y^a\,\nabla_g u)(x,y) = 0 \hbox{ for } (x,y)\in M\times \rr_+,\\
&u(x,0)=f(x) \qquad \hbox{ for } x\in M,
\end{split}\right.
\label{extension}\end{equation}
Moreover, the fractional Laplacian on $M$ is well defined and can be recovered through
\begin{equation}\label{Neumann}(-\Delta_M)^\gamma f=-d_{\gamma}\lim_{y\to 0}y^a \,\partial_y u,\end{equation}
for a constant
\begin{equation}
\label{constant-d}d_{\gamma}=2^{2\gamma-1}\frac{\Gamma(\gamma)}{\Gamma(1-\gamma)}.\end{equation}
\end{teo}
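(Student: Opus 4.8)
The plan is to exploit the product structure of the metric $g=g_M+dy^2$, which decouples the weighted operator. A direct computation gives
\[
\divergence_g(y^a\,\nabla_g u)=y^a\Big(\Delta_M u+\del_{yy}u+\tfrac{a}{y}\del_y u\Big),
\]
so that \eqref{extension} is equivalent to $\Delta_M u+\del_{yy}u+\tfrac{a}{y}\del_y u=0$. Since $M$ is complete, $-\Delta_M$ is essentially self-adjoint and nonnegative on $L^2(M)$, and I would solve the problem through its functional calculus. Decomposing $f$ against the spectral resolution $\{E_\lambda\}$ of $-\Delta_M$, each spectral component must solve the Bessel-type ODE $\del_{yy}\phi+\tfrac{a}{y}\del_y\phi-\lambda\phi=0$ with $\phi(0)=1$ and $\phi$ bounded as $y\to\infty$. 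The unique such solution is $\phi_\lambda(y)=\varphi(\sqrt{\lambda}\,y)$ with $\varphi(s)=\tfrac{2^{1-\gamma}}{\Gamma(\gamma)}\,s^\gamma K_\gamma(s)$, where $K_\gamma$ is the modified Bessel function of the second kind; the normalization $\varphi(0)=1$ follows from the small-argument expansion of $K_\gamma$. I would then define the extension by $u(\cdot,y)=\varphi\big(y\sqrt{-\Delta_M}\big)f$.

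Existence and the correct function space then follow from Plancherel for the spectral measure: the Dirichlet energy $\int_0^\infty\!\int_M y^a\abs{\nabla_g u}^2\,dV_g\,dy$ equals, up to a constant, the spectral integral $\int_0^\infty \lambda^\gamma\,d\langle E_\lambda f,f\rangle$, which is finite precisely when $f\in H^\gamma(M)$. Uniqueness I would obtain by an energy argument in the weighted space $W^{1,2}(M\times\rr_+,\,y^a\,dV_g\,dy)$: any finite-energy solution with zero trace must vanish, using that $y^a$ is an $\calA_2$ weight so that the weighted framework of \cite{FKS} applies fiberwise and integration by parts in $y$ is justified.

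For the recovery formula \eqref{Neumann}, I would use the small-$s$ expansion $\varphi(s)=1-c_\gamma s^{2\gamma}+o(s^{2\gamma})$, with $c_\gamma=2^{-2\gamma}\Gamma(1-\gamma)/(\gamma\Gamma(\gamma))$ read off from the two leading terms of $K_\gamma$. Computing band by band, the scaling $s=\sqrt\lambda\,y$ yields $\lim_{y\to0}y^a\,\del_y u=-2\gamma c_\gamma\,(-\Delta_M)^\gamma f$ in the spectral sense, and the identity $d_\gamma=(2\gamma c_\gamma)^{-1}$ reproduces exactly the constant \eqref{constant-d}.

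The main obstacle is the noncompactness. The argument above is immediate when the spectrum of $-\Delta_M$ is discrete, but on $\hh^n$ (and on the manifolds of Proposition \ref{prop:summary}) the spectrum is continuous, so I must justify that $u=\varphi(y\sqrt{-\Delta_M})f$ is a genuine, sufficiently regular solution and that the limit $y\to0$ may be interchanged with the spectral integral. Here I would pass to the subordination representation
\[
u(x,y)=\frac{y^{2\gamma}}{4^\gamma\Gamma(\gamma)}\int_0^\infty e^{t\Delta_M}f(x)\,e^{-y^2/(4t)}\,\frac{dt}{t^{1+\gamma}},
\]
which recasts the extension in terms of the heat semigroup (as in \cite{StingaTorrea}) and produces an explicit Poisson-type kernel by subordinating the heat kernel of $M$. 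This is exactly where the hypotheses of Proposition \ref{prop:summary} enter: the upper heat-kernel bounds guarantee that the subordinated kernel is finite, that the integrals converge and can be differentiated in $y$, and that the behavior at the boundary $y=0$ and at infinity on $M$ is controlled, so that all the formal spectral manipulations become rigorous.
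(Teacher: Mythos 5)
Your proposal is correct and follows essentially the same route as the paper: the spectral/functional-calculus construction with the Bessel profile $\varphi_\gamma(s)=2^{1-\gamma}\Gamma(\gamma)^{-1}s^\gamma K_\gamma(s)$ (which is the paper's Fourier-analysis argument on $\hh^n$, phrased abstractly), combined with the Stinga--Torrea subordination formula and the heat-kernel upper bounds supplied by the geometric hypotheses of Proposition \ref{prop:summary} to make the construction rigorous. Your computation of the boundary constant, giving $d_\gamma=(2\gamma c_\gamma)^{-1}=2^{2\gamma-1}\Gamma(\gamma)/\Gamma(1-\gamma)$, agrees with \eqref{constant-d}.
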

In view of the definition of the metric $g$, the extension problem \eqref{extension} writes
\begin{equation}\label{extproduct}
\left\{\begin{split}
&\partial_{yy} v + \frac{a}{y}\, \partial_y v +\Delta_{M} v \,(x,y) = 0 \hbox{ for } (x,y)\in M\times \rr_+,\\
&v(x,0)=f(x) \qquad \hbox{ for } x\in M.
\end{split}\right.
\end{equation}

In the case of hyperbolic space one is able to carry out very explicit calculations, since Fourier analysis and harmonic analysis tools are available. Indeed, $\mathbb H^n$ is the simplest example of a symmetric space of rank one. In addition, we give a precise formula for $(-\Delta_{\mathbb H^n})^\gamma$ in terms of a singular integral obtained as convolution with a well behaved kernel (see Theorem \ref{thm:singular-integral}). Elliptic a priori estimates may be obtained by understanding the asymptotics of this kernel; we show H\"older estimates as an application.

These results allow to set up semilinear problems for the fractional Laplacian on hyperbolic space. When the nonlinearity comes from a double well potential, one has existence and uniqueness of layer solutions, as well as some symmetry results (see \cite{Gonzalez-Saez-Sire}).

Semilinear equations for the usual Laplace-Beltrami operator on hyperbolic space were studied in \cite{Mancini-Sandeep, Castorina-Fabbri-Mancini-Sandeep}, for instance, in relation to conformal geometry. In their works,  a Sobolev inequality for the hyperbolic Laplacian appears naturally \cite{Mancini-Sandeep:Sobolev}. In \cite{Grunau-OuldAhmedou-Reichel}, the authors study the Paneitz operator on hyperbolic space, that is a conformally covariant operator of order 4. Higher order Sobolev inequalities were considered in \cite{Liu:Sobolev-hyperbolic}.

It is still an open question to set up the conformal geometry interpretation of the hyperbolic fractional Laplacian, and to study the associated fractional Yamabe problems.
One first step is to obtain trace Sobolev embeddings. This is the content of Theorem \ref{thm:trace-Sobolev}. The key idea, as in the Euclidean case, is to study the energy associated to problem \eqref{extproduct}, which allows to obtain sharp inequalities. Note that the best constant in this embedding is related to the Yamabe constant in the model case for the fractional Yamabe problem.

From the probability point of view, see the survey \cite{Cohen-Lifshits} for a construction of fractional L\'evy Brownian fields on hyperbolic space.

We note here that the fractional Laplacian on the torus constructed from the extension \eqref{extension} has been considered in \cite{Roncal-Stinga:torus}, using a double Fourier series expansion.  However, as we have mentioned, the question on noncompact manifolds, where Fourier analysis is not available, is more delicate and not much is known. Here we try to give a first approach.

The relation between heat kernel and fractional powers of an operator is a very old one.
From the spectral theory and functional calculus point of view, Stinga and Torrea \cite{StingaTorrea} show that one can define the fractional Laplacian on a domain  $\Omega\subset\mathbb R^n$ through the extension \eqref{extension} provided that the heat kernel associated to $-\Delta_\Omega$ exists and it satisfies some decay properties. Since the heat kernel on general noncompact manifolds has been extensively studied depending on the underlying geometry, we take this approach to prove Theorem  \ref{thm-extension}.

The organization of the paper is as follows: In section \ref{hyperbolic space}, based on the results in $\rr^n$, we concentrate on definitions and properties of the fractional Laplacian on hyperbolic space. We include a definition in terms of the Fourier transform (subsection \ref{hyperbolicviafourier}), an expression in terms of a singular integral (Theorem \ref{thm:singular-integral} and its proof in subsection \ref{poissonhyperbolic}) and  the relation with an appropriate extension problem (both in terms of a Poisson kernel and  and energy formulation, in subsections \ref{poissonhyperbolic} and \ref{energyhyperbolic} respectively). These results imply regularity, a Hopf's maximum principle  and sharp trace Sobolev inequality. We finish section \ref{hyperbolic space} by discussing how to extend the previous results to other harmonic groups (subsection \ref{harmonicgroups}). In section
\ref{noncompactmanifolds} we start by discussing  a general framework under which the results in \cite{StingaTorrea}  can be generalized to non-compact manifolds.
 In order to carry the construction of a fractional Laplacian, it is necessary to obtain bounds on the heat kernel. These bounds are discussed
in subsection \ref{heatestimates}. In the following subsections
%\ref{symmetricspaces}, \ref{Geometrically finite hyperbolic manifolds}, \ref{Rotationally symmetric manifolds} and \ref{Manifolds with ends}
we workout examples of manifolds that fulfill the required conditions. Among these are
symmetric spaces, some geometrically finite hyperbolic manifolds, a class of rotationally symmetric manifolds and certain manifolds with ends.

\textbf{Aknowledgements:} the authors would like to thank Thierry Couhlon for useful remarks on \S 3.2 and to the anonymous referee.

\section{The extension problem on hyperbolic space}\label{hyperbolic space}

The real hyperbolic space $\mathbb H^n$, $n\geq 2$, is the simplest example of Riemannian symmetric spaces of the noncompact type.
%In particular, it can be realized as the symmetric space $G/K$, where $G=SO(1,n)_0$ and $K=SO(n)$.
Fourier analysis on (noncompact) Riemannian symmetric spaces has been well studied. We refer to Helgason's books \cite{H,Helgason:libro2,Helgason:libro3} for the basic Fourier theory, to \cite{Anker:multipliers} for the theory of $L^p$ multipliers and to \cite{Anker-Ji} for heat kernel and Green function estimates.

\subsection{Notations and definitions}

Several models for the $n$-dimensional hyperbolic space $\hh^n$ have been considered in the literature. Here we will define it as the upper branch of a hyperboloid in $\rr^{n+1}$ with the metric induced by the Lorentzian metric in $\rr^{n+1}$ given by $-dx_0^2+dx_1^2+\ldots+dx_n^2$. More precisely, we take
\begin{eqnarray*}
\hh^n & =\{(x_0,\ldots,x_n)\in \rr^{n+1}: x_0^2-x_1^2-\ldots-x_n^2=1, \; x_0>0\} \\
&= \{x\in \rr^{n+1}: x= (\cosh r, \sinh r \,\omega), \; r\geq 0, \; \omega \in \ms^{n-1}\},
\end{eqnarray*}
with the metric
%taking as coordinates de variables $r\in [0,\infty)$ and $\omega\in \ms^{n-1}$, we have that the metric on $\hh^n$ is given by
 $$g_{\mathbb H^n}=dr^2+\sinh^2 r\, d\omega^2,$$ where $d\omega^2$ is the metric on $\ms^{n-1}$.
Under these definitions the Laplace-Beltrami operator is given by
$$\Delta_{\hh^n}=\partial_{rr}+(n-1) \frac{\cosh r}{\sinh r}\, \partial_r+\frac{1}{\sinh^2 r}\,\Delta_{\ms^{n-1}}$$
and the volume element is $$\sinh^{n-1}r\;dr \, d\omega.$$
We denote by $[\cdot, \cdot]$ the internal product induced by the Lorentzian metric
$$[x,x']=x_0x_0'-x_1x_1'-\ldots -x_nx_n'.$$
The hyperbolic space is invariant under $SO(1,n)$, the group of Lorentz transformations of $\mathbb R^{n+1}$ that preserve this inner product. $\mathbb H^n$ can be actually defied as the quotient between the orbit $SO(1,n)e_0$ of the origin $e_0=(1,0,...,0)$ by the stabilizer of $e_0$, so that
$$\mathbb H^n\approx \frac{SO(1,n)}{SO(n)}.$$
In particular, using Cartan's decomposition, hyperboloids are symmetric spaces of rank one.
Finally, let us recall that, by means of stereographic projection through the hyperboloid origin, Poincar\'e's disk model is recovered, and from the disk model one obtains the model of the upper half-space by performing an inversion in a boundary point of the ball.

\subsection{Fourier transform and the fractional Laplacian}\label{hyperbolicviafourier}

We start by reviewing some basic facts about  Fourier transform on hyperbolic space, see \cite{GGG}, \cite{H} and \cite{Terras:book}.

Recall first that in $\rr^n$ the Fourier transform is given by
$$\hat{f}(\xi)=\int_{\rr^n} f(x)e^{- i x\cdot \xi} dx.$$
Notice that the functions  $e^{- i x\cdot \xi}$ are generalized (in the sense that they do not belong to $L^2$) eigenfunctions of the Laplacian associated to the eigenvalue $- |\xi|^2$. Moreover, the following inversion formula holds
$$f(x)=\frac{1}{(2\pi)^n}\int_{\rr^n} \hat{f}(\xi)e^{i x\cdot \xi} d\xi.$$

Similarly, in $\hh^n$ we consider the generalized eigenfunctions of the Laplace Beltrami operator:
$$h_{\lambda,\theta}(x)=[x,(1,\theta)]^{i\lambda-\frac{n-1}{2}}, \quad x\in \hh^n,$$
where $\lambda\in \rr$ and $\theta \in \ms^{n-1}$, that satisfy
$$\Delta_{\hh^n} h_{\lambda,\theta}=-\left(\lambda^2+\tfrac{(n-1)^2}{4}\right)h_{\lambda,\theta}.$$
In analogy with the Euclidean setting, the Fourier transform can be defined as (see \cite[Chapter III,  equation (4)]{H} or  \cite[Chapter 3, equation 3.5]{GGG}),
$$\hat{f}(\lambda, \theta)=\int_{\hh^n} f(x)\,h_{\lambda,\theta}(x)\,dx,$$
for $\lambda\in\rr$, $\omega\in \ms^{n-1}$, where $dx$ is the volume element in hyperbolic space. Moreover, the following inversion formula holds:
$$f(x)=\int_{-\infty}^{\infty}\int_{\ms^{n-1}}\bar{h}_{\lambda,\theta}(x)\hat{f}(\lambda,\theta)
\,\frac{d\theta \, d\lambda }{|c(\lambda)|^2},$$
where $c(\lambda)$ is the Harish-Chandra coefficient:
$$\frac{1}{|c(\lambda)|^2}=\frac 12\frac{|\Gamma(\frac{n-1}{2})|^2}{|\Gamma(n-1)|^2}\frac{|\Gamma(i\lambda+(\frac{n-1}{2})|^2}{|\Gamma(i\lambda)|^2}.$$
%$$\frac{1}{|c(\lambda)|^2}=\frac{1}{2(2\pi)^n}
%\frac{\left|\Gamma\left(i\lambda+\frac{n-1}{2}\right)\right|^2}
%{\left|\Gamma(i\lambda)\right|^2}.$$
Similarly, Plancherel formula holds:

\begin{equation}\label{Plancherel}
\int_{\hh^n}|f(x)|^2\,dx=\int_{\rr\times \mathbb S^{n-1}}|\hat{f}(\lambda,\theta)|^2\frac{d\theta \; d\lambda }{|c(\lambda)|^2}.\end{equation}

It is easy to check by integration by parts for compactly supported functions (and consequently, for  every $f\in L^2(\hh^n)$) that
\begin{align*}\widehat{\Delta_{\hh^n} f}(\lambda, \theta)=&\int_{\mathbb{H}^n}\Delta_{\hh^n} f(x)h_{\lambda,\theta}(x)\,dx= \int_{\mathbb{H}^n}f(x)\Delta_{\hh^n}h_{\lambda,\theta}(x)\,dx\\
=&
-\left(\lambda^2+\tfrac{(n-1)^2}{4}\right)\hat{f}(\lambda, \theta).\end{align*}
Having in mind the theory of spherically symmetric multipliers, we define the fractional Laplacian on hyperbolic space $(-\Delta_{\mathbb{H}^n})^\gamma f$ as the operator that satisfies
$$\widehat{(-\Delta_{\mathbb{H}^n})^\gamma f}= \left(\lambda^2+\tfrac{(n-1)^2}{4}\right)^\gamma\hat{f}.$$
Due to the inversion formula we could write
\begin{equation*}\begin{split}
(-\Delta_{\mathbb{H}^n})^\gamma f (x)=&
\int_{-\infty}^{\infty}\int_{\ms^{n-1}}\bar{h}_{\lambda,\theta}(x)\left(\lambda^2+\tfrac{(n-1)^2}{4}
\right)^\gamma \hat{f}(\lambda,\theta)
\,\frac{d\theta \, d\lambda }{|c(\lambda)|^2}\\
=&\int_{-\infty}^{\infty}\left[\int_{\hh^n}\left(\lambda^2+\tfrac{(n-1)^2}{4}\right)^\gamma k_\lambda(x,x') \,f(x')\,
dx'\right] d\lambda,
\end{split}\end{equation*}
with
$$k_\lambda(x, x') =\frac{1}{|c(\lambda)|^2}\int_{\ms^{n-1}} \bar{h}_{\lambda,\theta}(x) \,h_{\lambda,\theta}(x')\,d\theta.$$
Since the Laplacian commutes with the action of $g\in SO(1,n)$ we have that
\begin{equation}\label{L}k_\lambda(x, x') =k_\lambda(gx, gx'),\end{equation}
and in particular,
$$k_\lambda(x, x') =k_\lambda(d_{\mathbb H^n}(x, x')).$$
Denote $\rho=d_{\mathbb H^n}(x,x')$. It is natural to define the singular kernel
\begin{equation}\label{kernel}\mathcal K_\gamma(\rho)=\int_{-\infty}^{\infty}\left(\lambda^2+\tfrac{(n-1)^2}{4}\right)^\gamma
k_\lambda(\rho)\,d\lambda,\end{equation}
and to (formally) calculate
$$(-\Delta_{\mathbb H^n})^\gamma f(x)=\int_{\mathbb H^n} f(x')\mathcal K_\gamma(d_{\mathbb H^n}(x,x'))\;dx' $$
Note that, by the invariance of the problem by $SO(1,n)$, we may rewrite the singular integral as follows: let $g_x$ be a transformation that takes $0$ into $x$, and change variables $x'=g_x \tilde x$. Since $g_x$ is an isometry, $\rho=d_{\mathbb H^n} (x',x)=d_{\mathbb H^n}(\tilde x,0)$. Then
\begin{equation}\label{definition0}(-\Delta_{\mathbb H^n})^\gamma f(x)= \int_{\hh^n}f(g_x\tilde x) \mathcal K_\gamma(d_{\mathbb H^n}(\tilde x,0)) \,d\tilde x.\end{equation}
In the following we aim to make this formulation precise. For that, we need to recall the following formulas for $k_\lambda$ (see, for instance, \cite{Banica}), where by $\frac{\partial_\rho}{\sinh\rho}$ we mean first the derivative operator $\partial_\rho$, and then the multiplication by $\frac {1}{\sinh\rho}$.

\begin{lemma} We have that
\begin{equation} k_\lambda(\rho)=
\left(\frac{\partial_\rho}{\sinh\rho}\right)^\frac{n-1}{2}(\cos\lambda\rho)\label{Kodd}\end{equation}
for $n\geq 3$ odd, and for $n\geq 2$ even,
\begin{equation}\label{Keven}
k_\lambda(\rho)=
\int_\rho^\infty\frac{\sinh r}{\sqrt{\cosh r-\cosh\rho}}
\left(\frac{\partial_r}{\sinh r}\right)^\frac{n}{2}(\cos\lambda r)\, dr.
\end{equation}
\end{lemma}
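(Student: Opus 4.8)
The plan is to identify $k_\lambda(\rho)$ as the unique radial eigenfunction of $\Delta_{\hh^n}$ that is regular at the origin, and then to check that the right-hand sides of \eqref{Kodd} and \eqref{Keven} solve the same radial ODE with the same behaviour at $\rho=0$. Using the invariance \eqref{L} to place $x$ at the origin $e_0$ and writing $x'=(\cosh\rho,\sinh\rho\,\omega_0)$, the defining average of the eigenfunctions $h_{\lambda,\theta}$, each satisfying $\Delta_{\hh^n}h_{\lambda,\theta}=-(\lambda^2+\tfrac{(n-1)^2}{4})h_{\lambda,\theta}$, shows that $\varphi(\rho):=k_\lambda(\rho)$ is radial, smooth and even at $\rho=0$, and solves
\begin{equation*}
\varphi''+(n-1)\coth\rho\,\varphi'+\Big(\lambda^2+\tfrac{(n-1)^2}{4}\Big)\varphi=0.
\end{equation*}
It therefore suffices to verify that each proposed formula solves this equation, is regular at the origin, and carries the correct normalization; matching the (a priori $\lambda$-dependent) normalization is exactly the role played by the Harish-Chandra factor $1/|c(\lambda)|^2$.

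The cleanest bookkeeping uses the variable $t=\cosh\rho$, in which the radial operator becomes $(t^2-1)\partial_t^2+nt\,\partial_t$ and the operator $\frac{\partial_\rho}{\sinh\rho}$ is precisely $\partial_t$. I would first record the ladder (intertwining) relation: differentiating the dimension-$n$ equation once in $t$ shows that $\frac{\partial_\rho}{\sinh\rho}$ maps a solution in dimension $n$ (eigenvalue $\lambda^2+\tfrac{(n-1)^2}{4}$) to a solution in dimension $n+2$ (eigenvalue $\lambda^2+\tfrac{(n+1)^2}{4}$), keeping $\lambda$ fixed, the shift $c_{n+2}-c_n=n$ being exactly the extra first-order term produced by the differentiation. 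Evenness and regularity at $\rho=0$ are preserved. Starting from the base case $n=1$, where the equation reduces to $\varphi''+\lambda^2\varphi=0$ with regular even solution $\cos\lambda\rho$, applying the ladder $\tfrac{n-1}{2}$ times yields \eqref{Kodd} for odd $n\geq3$.

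For even $n$ I would use the method of descent from the odd dimension $n+1$. By the previous step, $F(r):=\big(\frac{\partial_r}{\sinh r}\big)^{n/2}(\cos\lambda r)$ is the radial eigenfunction in dimension $n+1$, so \eqref{Keven} asserts that the dimension-$n$ kernel is the Abel-type transform $\mathcal{A}F(\rho)=\int_\rho^\infty\frac{\sinh r}{\sqrt{\cosh r-\cosh\rho}}\,F(r)\,dr$. In the variable $t=\cosh\rho$ this is the half-order Weyl integral $\int_{\cosh\rho}^\infty(t-\cosh\rho)^{-1/2}F\,dt$. The crux is to prove that this half-integration intertwines the two dimensions, i.e. sends a dimension-$(n+1)$ eigenfunction to a dimension-$n$ eigenfunction: differentiation commutes with $\mathcal{A}$, while the multiplication by $t$ in $(t^2-1)\partial_t^2+(n+1)t\,\partial_t$, handled by integration by parts against the weight $(t-\cosh\rho)^{-1/2}$, accounts for both the drop $n+1\to n$ in the first-order coefficient and the eigenvalue change $c_{n+1}\to c_n$ (a shift of $\tfrac{2n-1}{4}$). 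Together with the limiting behaviour of $\mathcal{A}F$ as $\rho\to0$ this establishes \eqref{Keven}.

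I expect the even case to be the main obstacle. The Abel-transform intertwining requires careful integration by parts across the integrable singularity at $r=\rho$ and control of the boundary terms, and the half-integer eigenvalue bookkeeping is less transparent than the clean integer ladder of the odd case. The second delicate point, common to both parities, is pinning down the normalization: one must check that the explicit solution produced by the ladder (and descent) agrees with $k_\lambda$ and not merely up to a constant, which is most economically done by comparing the leading asymptotics of both sides as $\rho\to0$, where the factor $1/|c(\lambda)|^2$ in the definition of $k_\lambda$ enters.
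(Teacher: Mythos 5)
Your proposal is correct, but note first how it compares to the paper: the paper does not prove this lemma at all, it simply quotes the formulas from \cite{Banica}, where they come out of the classical explicit spherical analysis on $\mathbb H^n$. Your argument is a genuinely self-contained alternative, and its two pillars do check out. For the odd case, with $t=\cosh\rho$ the radial operator is $L_n=(t^2-1)\partial_t^2+nt\,\partial_t$ and $\frac{\partial_\rho}{\sinh\rho}=\partial_t$; differentiating $L_n\varphi=-c_n\varphi$ once in $t$ gives $L_{n+2}(\partial_t\varphi)=-(c_n+n)\,\partial_t\varphi$, and indeed $c_n+n=\lambda^2+\tfrac{(n+1)^2}{4}=c_{n+2}$, so your ladder is exact and preserves evenness and regularity at $t=1$. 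For the even case, substituting $u=\cosh\rho+s^2$ turns the Abel transform into $G(\cosh\rho)=2\int_0^\infty F(\cosh\rho+s^2)\,ds$; after inserting the dimension-$(n+1)$ equation, two integrations by parts in $s$ annihilate the remainder precisely because the leftover zeroth-order coefficient is $c_n-c_{n+1}=-\tfrac{2n-1}{4}$, and the boundary terms vanish since each application of $\partial_r/\sinh r$ gains a factor of order $e^{-r}$, so that $F\sim e^{-nr/2}$ at infinity; hence the intertwining you describe holds. One caveat about the normalization step you flagged as delicate: with the paper's stated Harish-Chandra factor, the identity is actually only true up to a $\lambda$-independent dimensional constant and sign. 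For instance, for $n=3$ one has $|c(\lambda)|^{-2}=\lambda^2/2$, so $k_\lambda(0)=2\pi\lambda^2$, whereas the right-hand side of \eqref{Kodd} tends to $-\lambda^2$ as $\rho\to 0$. This mismatch is the paper's (harmless) sloppiness with constants; what the $c$-function factor genuinely buys, and what your asymptotic matching at $\rho\to 0$ would confirm, is that the proportionality constant does not depend on $\lambda$, which is all that the subsequent asymptotic analysis of $\mathcal K_\gamma$ in the paper uses.
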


\begin{rem}
Note that $k_\gamma(\rho)$ is an even function, and so is $\mathcal K_\gamma(\rho)$ from \eqref{kernel}.
\end{rem}

We will also need this well known result (see \cite{Abramowitz-Stegun}):
\begin{lemma}\label{lemma-Bessel}
The solution of the ODE
\begin{equation*}
\label{Bessel1}\partial_{ss} \varphi+ \frac{\alpha}{s}\, \partial_s \varphi -\varphi = 0. \end{equation*}
may be written as $\varphi(s)=s^\nu \psi(s)$, for $\alpha=1-2\nu$, where $\psi$ solves the is the well known Bessel equation
\begin{equation}\label{Bessel2}
s^2\psi''+s\psi'-(s^2+\nu^2)\psi=0.
\end{equation}
In addition, \eqref{Bessel2} has two linearly independent solutions, $I_\nu,K_\nu$, which are the modified Bessel functions; their asymptotic behavior is given precisely by
\begin{equation*}\begin{split}
I_\nu(s)&\sim \frac{1}{\Gamma(\nu+1)}\lp\frac{s}{2}\rp^\nu\lp 1+\frac{s^2}{4(\nu+1)}+\frac{s^4}{32(\nu+1)(\nu+2)}+\ldots\rp,\\
K_\nu(s)&\sim \frac{\Gamma(\nu)}{2}\lp\frac{2}{s}\rp^{\nu}
\lp 1+\frac{s^2}{4(1-\nu)}+\frac{s^4}{32(1-\nu)(2-\nu)}+\ldots\rp
\\&\quad+\frac{\Gamma(-\nu)}{2}\lp\frac{s}{2}\rp^\nu\lp 1+\frac{s^2}{4(\nu+1)}+\frac{s^4}{32(\nu+1)(\nu+2)}+\ldots\rp,
\end{split}
\end{equation*}
for $s\to 0^+$, $\nu\not\in\mathbb Z$. And when $s\to +\infty$,
\begin{equation*}\begin{split}\label{asymptotic2}I_\nu(s)\sim \frac{1}{\sqrt{2\pi s}}e^s\lp1-\frac{4\nu^2-1}{8s}+\frac{(4\nu^2-1)(4\nu^2-9)}{2!(8s)^2}-\ldots \rp,\\
K_\nu(s)\sim \sqrt{\frac{\pi}{2s}}e^{-s}\lp1+\frac{4\nu^2-1}{8s}+\frac{(4\nu^2-1)(4\nu^2-9)}{2!(8s)^2}+\ldots \rp.\end{split}\end{equation*}
\end{lemma}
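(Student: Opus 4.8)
The plan is to verify the stated change of variables by a direct substitution, and then to read off everything else from the classical solution theory of the resulting modified Bessel equation, so there is essentially one computation proper to the lemma. First I would set $\varphi(s)=s^\nu\psi(s)$ and compute
\[
\varphi'=\nu s^{\nu-1}\psi+s^\nu\psi',\qquad
\varphi''=\nu(\nu-1)s^{\nu-2}\psi+2\nu s^{\nu-1}\psi'+s^\nu\psi''.
\]
Substituting into $\partial_{ss}\varphi+\tfrac{\alpha}{s}\partial_s\varphi-\varphi=0$ and grouping by the power of $s$, the coefficient of $s^{\nu-1}\psi'$ is $2\nu+\alpha$ and that of $s^{\nu-2}\psi$ is $\nu(\nu-1)+\alpha\nu$. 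The choice $\alpha=1-2\nu$ forces these to equal $1$ and $-\nu^2$ respectively, so the equation collapses to $s^\nu\psi''+s^{\nu-1}\psi'-\nu^2 s^{\nu-2}\psi-s^\nu\psi=0$; multiplying by $s^{2-\nu}$ gives exactly \eqref{Bessel2}. This is a short and unavoidable calculation, and it is the whole content of the first assertion.

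For the remaining claims I would invoke the standard theory of \eqref{Bessel2}. The point $s=0$ is a regular singular point with indicial equation $r^2-\nu^2=0$ and roots $r=\pm\nu$. When $\nu\notin\mathbb Z$ the Frobenius recursion yields two linearly independent power-series solutions, the modified Bessel function
\[
I_\nu(s)=\sum_{k\ge 0}\frac{1}{k!\,\Gamma(\nu+k+1)}\lp\frac{s}{2}\rp^{\nu+2k},
\]
with leading term $\tfrac{1}{\Gamma(\nu+1)}(s/2)^\nu$, together with $I_{-\nu}$. The second solution in the statement is the combination $K_\nu=\tfrac{\pi}{2\sin(\nu\pi)}\lp I_{-\nu}-I_\nu\rp$, which therefore carries \emph{both} an $s^{-\nu}$ and an $s^{\nu}$ branch; reading off the two series and simplifying the constants via $\tfrac{\pi}{\sin(\nu\pi)\Gamma(\nu+1)}$-type identities produces the displayed $\tfrac{\Gamma(\nu)}{2}$ and $\tfrac{\Gamma(-\nu)}{2}$ factors. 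The behaviour as $s\to+\infty$ instead comes from the irregular singular point at infinity: substituting $\psi=s^{-1/2}e^{\pm s}w(s)$ reduces \eqref{Bessel2} to an equation whose formal solution is an asymptotic series in $1/s$, the two exponential regimes identifying $I_\nu\sim(2\pi s)^{-1/2}e^{s}$ as the exponentially growing solution and $K_\nu\sim(\pi/2s)^{1/2}e^{-s}$ as the exponentially decaying one, with the corrections $\tfrac{4\nu^2-1}{8s},\dots$ generated term by term. All of these expansions are tabulated in \cite{Abramowitz-Stegun}.

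I do not expect a genuine analytic obstacle here, since the lemma is a repackaging of classical Bessel-function facts. The only points requiring care are bookkeeping ones: tracking the normalization constants $\Gamma(\nu)$, $\Gamma(-\nu)$ in the expansion of $K_\nu$, and making explicit why $\nu\notin\mathbb Z$ is essential. For integer $\nu$ the indicial roots coincide modulo the integers, one has $I_{-\nu}=I_\nu$, and the second solution acquires a logarithmic term, so the clean two-branch expansion for $K_\nu$ breaks down; this is exactly the hypothesis flagged in the statement, and it is the one place where I would be careful to restrict the range of $\nu$.
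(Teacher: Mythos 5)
Your proposal is correct, and it takes essentially the route the paper intends: the paper offers no proof of this lemma at all, simply flagging it as a well-known result with a citation to \cite{Abramowitz-Stegun}, and your argument is the natural expansion of that citation. Your substitution computation (the coefficients $2\nu+\alpha=1$ and $\nu(\nu-1)+\alpha\nu=-\nu^2$ under $\alpha=1-2\nu$) is right, and the remaining bookkeeping — recovering the $\tfrac{\Gamma(\nu)}{2}$ and $\tfrac{\Gamma(-\nu)}{2}$ factors from $K_\nu=\tfrac{\pi}{2\sin(\nu\pi)}\lp I_{-\nu}-I_\nu\rp$ via the reflection formula, and the large-$s$ expansions from the irregular singular point at infinity — matches the tabulated expansions the paper relies on, including the correct role of the hypothesis $\nu\notin\mathbb Z$.
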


We now look at the asymptotics of the kernel defined in \eqref{kernel}:

\begin{teo} \label{thm-asymptotics}
For $\gamma\in(-1/2,1)$ it holds that
\begin{itemize}
\item For $n\geq 3$ odd,
\begin{equation}\label{K1}\mathcal K_\gamma(\rho)=\alpha_\gamma \left(\frac{\partial_\rho}{\sinh\rho}\right)^\frac{n-1}{2}
\rho^{-\frac{1}{2}-\gamma}K_{-\frac{1}{2}-\gamma}\left(\tfrac{n-1}{2} \rho\right),\end{equation}

\item When $n\geq 2$ is even,
$$\mathcal K_\gamma(\rho)=\alpha_\gamma
\int_\rho^\infty\frac{\sinh r}{\sqrt{\cosh r-\cosh\rho}}
\left(\frac{\partial_r}{\sinh r}\right)^\frac{n}{2}\left[r^{-\frac{1}{2}-\gamma}K_{-\frac{1}{2}-\gamma}\left(\tfrac{n-1}{2} r\right)\right]\, dr.$$
\end{itemize}
Here $K_{-\frac{1}{2}-\gamma}$ is the solution to the modified Bessel equation given by Lemma \ref{lemma-Bessel}.

Additionally, $\mathcal K_\gamma(\rho)$ has the asymptotic behavior:
\begin{itemize}
\item[\emph{i.}] As $\rho\to 0$,
\begin{equation}\label{asymptotics-zero}\mathcal K_\gamma(\rho)\sim \rho^{-n-2\gamma}.
%\frac{C_{\gamma,n}}{\rho^{n+2\gamma}},
\end{equation}
%for a constant $C_{\gamma,n}>0$ satisfying
%\begin{equation}\label{constant-C}\lim_{\gamma\to 0^-}\frac{C_{\gamma,n}}{\gamma}=C_n.\end{equation}
\item[\emph{ii.}] As $\rho\to\infty$,
$$\mathcal K_\gamma(\rho)\sim \rho^{-1-\gamma}e^{-(n-1)\rho}.$$
\end{itemize}
\end{teo}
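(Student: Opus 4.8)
The plan is to substitute the explicit formulas \eqref{Kodd} and \eqref{Keven} for $k_\lambda(\rho)$ into the definition \eqref{kernel} and reduce the computation of $\mathcal K_\gamma$ to a single one-dimensional Fourier--cosine integral. Writing $a=\tfrac{n-1}{2}$, the one genuinely nontrivial ingredient is
\[
 J_s(\rho)=\int_{-\infty}^{\infty}(\lambda^2+a^2)^{-s}\cos(\lambda\rho)\,d\lambda,
\]
which for $\operatorname{Re}s>\tfrac12$ converges absolutely and is evaluated by the classical Bessel integral representation (the functions $K_\nu$ of Lemma \ref{lemma-Bessel}) as
\[
 J_s(\rho)=\frac{2\sqrt\pi}{\Gamma(s)\,(2a)^{s-1/2}}\,\rho^{\,s-1/2}\,K_{s-1/2}(a\rho).
\]
Both sides are holomorphic in $s$, so the identity continues analytically to $s=-\gamma$ for $\gamma\in(-\tfrac12,1)$, where $s-\tfrac12=-\tfrac12-\gamma$. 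This is precisely how I would give meaning to the formally divergent integral $\int(\lambda^2+a^2)^\gamma\cos(\lambda\rho)\,d\lambda$ that appears after substitution, and it produces the factor $\rho^{-1/2-\gamma}K_{-1/2-\gamma}(a\rho)$ together with the constant $\alpha_\gamma$.

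For $n$ odd I would then pull the operator $\bigl(\tfrac{\partial_\rho}{\sinh\rho}\bigr)^{(n-1)/2}$, which acts only in $\rho$ and commutes with analytic continuation in $s$, outside the $\lambda$-integral; applying it to the closed form for $J_{-\gamma}$ gives \eqref{K1} at once. For $n$ even I would argue identically, using Fubini in the absolutely convergent regime $\operatorname{Re}s>\tfrac12$ to interchange the $r$-integration of \eqref{Keven} with the $\lambda$-integral and then continuing in $s$; this places the bracket $r^{-1/2-\gamma}K_{-1/2-\gamma}(a r)$ under the same operator $\bigl(\tfrac{\partial_r}{\sinh r}\bigr)^{n/2}$ and the same integral $\int_\rho^\infty \tfrac{\sinh r}{\sqrt{\cosh r-\cosh\rho}}\,dr$, yielding the stated formula.

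For the behaviour as $\rho\to0$ I would insert the small-argument expansion of $K_{-1/2-\gamma}$ from Lemma \ref{lemma-Bessel}: its leading term makes the bracket behave like $\rho^{-1-2\gamma}$. Since $\sinh\rho\sim\rho$, each application of $\tfrac{\partial_\rho}{\sinh\rho}$ lowers a power of $\rho$ by two, so the $\tfrac{n-1}{2}$-fold operator turns $\rho^{-1-2\gamma}$ into $\rho^{-n-2\gamma}$, which is \eqref{asymptotics-zero}; one checks that none of the intermediate exponents $-1-2\gamma-2k$ vanishes (true for $\gamma>-\tfrac12$), so the leading term is not annihilated, while the analytic part of the $K$-expansion contributes only lower order. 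For $n$ even the same bracket asymptotics together with the substitution $r=\rho u$ in \eqref{Keven} reduces the integral to $\rho^{-n-2\gamma}$ times the convergent constant $\int_1^\infty u^{-n-2\gamma}(u^2-1)^{-1/2}\,du$.

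For $\rho\to\infty$ I would use $K_\nu(z)\sim\sqrt{\pi/2z}\,e^{-z}$, so the bracket behaves like $\rho^{-1-\gamma}e^{-a\rho}$. Here $\tfrac{1}{\sinh\rho}\sim 2e^{-\rho}$ while $\partial_\rho$ brings down the exponential rate, so each application of $\tfrac{\partial_\rho}{\sinh\rho}$ multiplies by a further factor $e^{-\rho}$ and leaves the polynomial order unchanged to leading order; after $\tfrac{n-1}{2}$ steps the rate is $-a-\tfrac{n-1}{2}=-(n-1)$, giving $\rho^{-1-\gamma}e^{-(n-1)\rho}$. For $n$ even I would set $r=\rho+t$ in \eqref{Keven}; the factor $\tfrac{\sinh r}{\sqrt{\cosh r-\cosh\rho}}$ then carries a square-root singularity of the form $(e^t-1)^{-1/2}$, the $\rho$-dependent exponentials collect to $e^{-(n-1)\rho}$, and the residual $t$-integral converges to a constant. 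The main obstacle throughout is the legitimacy of these interchanges, since the $\lambda$-integral is only conditionally convergent (and for $\gamma>0$ not convergent at all); the analytic-continuation device above is what resolves this, isolating the single nontrivial Bessel evaluation from the otherwise elementary operator bookkeeping.
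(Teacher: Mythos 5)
Your proposal is correct, and it reaches the Bessel representation by a genuinely different mechanism than the paper, even though the two arguments share the same skeleton: pull the operator $\left(\frac{\partial_\rho}{\sinh\rho}\right)^{\frac{n-1}{2}}$ (resp.\ the Abel-type $r$-integral in even dimensions) out of \eqref{kernel}, identify the one-dimensional integral $\int_{-\infty}^{\infty}\left(\lambda^2+\tfrac{(n-1)^2}{4}\right)^\gamma\cos(\lambda\rho)\,d\lambda$ with $\alpha_\gamma\,\rho^{-\frac12-\gamma}K_{-\frac12-\gamma}\left(\tfrac{n-1}{2}\rho\right)$, then read off the asymptotics from Lemma \ref{lemma-Bessel}. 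Where you differ is in the key step: you invoke the classical Basset integral representation of $K_\nu$ in the absolutely convergent regime $\operatorname{Re} s>\tfrac12$ and analytically continue in the exponent $s$ down to $s=-\gamma$, whereas the paper works directly at the given $\gamma$ in the sense of tempered distributions: it notes that $h(\lambda)=\left(\lambda^2+\tfrac{(n-1)^2}{4}\right)^\gamma$ satisfies $\left(\lambda^2+\tfrac{(n-1)^2}{4}\right)\partial_\lambda h=2\lambda\gamma\,h$, takes Fourier transforms to turn this into a Bessel-type ODE for $\hat h$, and then discards the $I_{-\frac12-\gamma}$ branch because a tempered distribution can have at most polynomial growth. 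Your route buys an explicit constant and avoids distribution theory almost entirely, at the mild cost of citing a table integral and of the unstated (but harmless, since the theorem concerns the kernel for $\rho>0$) fact that continuation in $s$ cannot introduce corrections supported at $\rho=0$; the paper's route is self-contained given Lemma \ref{lemma-Bessel}, and its distributional framing is what later feeds into the principal-value interpretation in Theorem \ref{thm:singular-integral}. You are also more complete in two places the paper compresses: the paper treats only odd $n$ (``the calculations for $n$ even are similar'') and states only the asymptotics of $\hat h$ itself, leaving implicit the bookkeeping that each application of $\frac{\partial_\rho}{\sinh\rho}$ lowers the exponent by $2$ at the origin and adds $1$ to the exponential decay rate at infinity; you carry this out, including the check that no intermediate coefficient $-1-2\gamma-2k$ vanishes when $\gamma>-\tfrac12$, and you supply the $r=\rho u$ and $r=\rho+t$ substitutions needed to handle the even-dimensional integral.
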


%We start by studying the case $\gamma<0$. We will complete the proof for positive $\gamma$ in Theorem \ref{thm:singular-integral}.

\begin{proof}

We assume that $n$ is odd; the calculations for $n$ even are similar. Using \eqref{Kodd} we have formally for $n\geq 3$ that
\begin{align*}
\mathcal K_\gamma(\rho)=&\left(\frac{\partial_\rho}{\sinh\rho}\right)^\frac{n-1}{2}\left( \int_{-\infty}^{\infty}
\left(\lambda^2+\tfrac{(n-1)^2}{4}\right)^\gamma\cos\lambda\rho \,d\lambda\right)\\
=&\left(\frac{\partial_\rho}{\sinh\rho}\right)^\frac{n-1}{2}\left( \int_{-\infty}^{\infty}
\left(\lambda^2+\tfrac{(n-1)^2}{4}\right)^\gamma e^{-i\lambda\rho}\, d\lambda\right).
\end{align*}
The last equality follows from $\left(\lambda^2+\tfrac{(n-1)^2}{4}\right)^\gamma \sin \lambda\rho$ being odd.

Notice that the equality above holds when $\gamma<-\frac{1}{2}$, while for bigger values of $\gamma$ needs to be interpreted in the sense of distributions.
Hence, we need to compute first the distributional Fourier transform (in $\rr$) of $h(\lambda):= \left(\lambda^2+\tfrac{(n-1)^2}{4}\right)^\gamma$. Since $ \left(\lambda^2+\tfrac{(n-1)^2}{4}\right)^\gamma$ is a tempered distribution, it has Fourier transform which is also tempered distribution.

Moreover,
$$\left(\lambda^2+\tfrac{(n-1)^2}{4}\right) \partial_\lambda h=2\lambda\gamma \,h. $$
Taking Fourier transform, we have
$$\left(-\partial_{\rho\rho}+\tfrac{(n-1)^2}{4}\right) (i\rho \,\hat h)=2\gamma\,(i\partial_\rho \hat h), $$
or equivalently
$$ \rho\, \partial_{\rho\rho} \hat h+2(1+\gamma)\,\partial_\rho \hat h- \tfrac{(n-1)^2}{4}\rho\, \hat h=0.$$
By performing the change of variables $s=\frac{n-1}{2}\rho$ and denoting  $\varphi(s)=\hat h(\rho)$, we obtain the ODE
$$ \partial_{ss} \varphi+\frac{2(1+\gamma)}{s}\partial_s \varphi-  \varphi=0.$$
From Lemma \ref{lemma-Bessel} we have the solution may be written as
 $$\hat h(\rho)=\rho^{-\frac{1}{2}-\gamma}\left(\alpha_\gamma K_{-\frac{1}{2}-\gamma}\left(\tfrac{n-1}{2}\rho\right)+\beta_\gamma I_{-\frac{1}{2}-\gamma}\left(\tfrac{n-1}{2}\rho\right)\right),$$
where $K_{-\frac{1}{2}-\gamma}, I_{-\frac{1}{2}-\gamma},$ are the solutions to the  modified Bessel equation given in the same Lemma.

Since $h$ is a tempered distribution, $\hat{h}$ can at most have polynomial growth, hence, necessarily $\beta_\gamma=0$.
 %Since $\hat{h}\to \infty$ as $\rho\to 0$,
%as a distributional solution it needs to be interpreted as principal value.
Notice that for $\gamma<0$ the function $\hat{h}$ is integrable near $\rho =0$, while for $\gamma>0$ it will need to be interpreted in the principal value sense
(see Theorem \ref{thm:singular-integral} below). Moreover,
recalling the asymptotic formulas for Bessel functions from Lemma \ref{lemma-Bessel} we have that
\begin{equation*}
\begin{split}
&\hat{h}\sim  \alpha_\gamma\,\rho^{-1-2\gamma}\,\lp\frac{(n-1)}{2}\rp^{-1-2\gamma}\,\Gamma(\frac{1}{2}+\gamma)2^{-\frac 12+\gamma}
\quad \mbox{as }\rho\to 0,\\
&\hat{h}\sim \alpha_\gamma \,\rho^{-\frac{1}{2}-\gamma} \sqrt{\frac{\pi}{(n-1)\rho}}e^{-\frac{n-1}{2}\rho} \quad \mbox{as }\rho\to\infty.
\end{split}\end{equation*}
Therefore \eqref{K1} and \eqref{asymptotics-zero} are proved.
% with $C_{\gamma,n}$ behaving like $\alpha_\gamma$ times a constant as $\gamma\rightarrow 0^-$.

%The value of the constant $\alpha_\gamma$ can be computed by observing that
%$$(2\pi)^n(\tfrac{(n-1)^2}{4})^\gamma=(2\pi)^n h(0)=\int_{-\infty}^{\infty} \hat h(\rho)\,d\rho=\alpha_\gamma\int_{-\infty}^{\infty} \rho^{-\frac{1}{2}-\gamma} K_{-\frac{1}{2}-\gamma}\left(\tfrac{n-1}{2}\rho\right)\,d\rho.$$
%This integration is justified if $\gamma<0$, and shows that $\alpha_\gamma$ behaves like $\gamma$ times a constant as $\gamma\rightarrow 0^-$, thus the limit in \eqref{constant-C}.
%The positivity of $C_n$ in \eqref{constant-C} is obtained in view of Remark \ref{positivity}.
%$$\lim_{\rho \to 0}\rho^{1+2\gamma}\hat{h}(\rho)= C_n^1\frac{\Gamma(\frac{1}{2}+\gamma)}{4}\lp\frac{2}{n-1}\rp^{\frac{1}{2}+\gamma}.$$ We relabel the constant as $C_n$ (dropping the superindex).
\end{proof}

For $\gamma<0$, $\mathcal K_\gamma$ plays the role of the Riesz potential on Euclidean space. As a consequence, because of the asymptotics \eqref{asymptotics-zero}, we have that expression \eqref{definition0} for the fractional Laplacian is justified when $\gamma<0$. On the other hand, for $\gamma>0$ we have:

\begin{teo}\label{thm:singular-integral} Fix $\gamma\in(0,1)$. Let $f\in L^1(\mathbb H^n)\cap \mathcal C^{\alpha}(\mathbb H^n)$ for some $\alpha>2\gamma$. Then
\begin{align}\label{PVformula}(-\Delta_{\mathbb{H}^n})^\gamma f (x)= \hbox{ P.V. }  \int_{\hh^n}(f(x')-f(x)) \,\mathcal K_\gamma(d_{\mathbb H^n}(x',x)) \,dx'.
\end{align}
\end{teo}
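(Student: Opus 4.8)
The plan is to justify the passage from the formal convolution formula \eqref{definition0} to the principal value formula \eqref{PVformula}. The starting point is the identity
\[
(-\Delta_{\mathbb H^n})^\gamma f(x) = \int_{\hh^n} f(g_x\tilde x)\,\mathcal K_\gamma(d_{\mathbb H^n}(\tilde x,0))\,d\tilde x,
\]
which is rigorous for $\gamma<0$ by the mild singularity in \eqref{asymptotics-zero}, but for $\gamma\in(0,1)$ the kernel behaves like $\rho^{-n-2\gamma}$ near the origin and is not locally integrable. The standard device is to symmetrize: since $\mathcal K_\gamma$ is even and the measure $d\tilde x$ is invariant under the reflection $\tilde x\mapsto -\tilde x$ (in geodesic normal coordinates around $0$), one can write the singular integral against the \emph{difference} $f(x')-f(x)$, which kills the leading singularity. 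First I would make this precise by analytic continuation in $\gamma$: the left-hand side is defined through the spectral multiplier for all $\gamma$, so I would verify that both sides of \eqref{PVformula} depend analytically (or at least continuously) on $\gamma$ and agree for $\gamma<0$, then extend to $\gamma\in(0,1)$.

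More concretely, I would establish convergence of the principal value directly. Fix $x$ and set $\rho=d_{\mathbb H^n}(x',x)$. Split the integral into a near region $\{\rho<1\}$ and a far region $\{\rho\ge 1\}$. On the far region, the exponential decay from part (ii) of Theorem \ref{thm-asymptotics}, namely $\mathcal K_\gamma(\rho)\sim \rho^{-1-\gamma}e^{-(n-1)\rho}$, together with $f\in L^1(\mathbb H^n)$ and the volume growth $\sinh^{n-1}\rho$, shows that $\int_{\rho\ge 1}|f(x')-f(x)|\,\mathcal K_\gamma(\rho)\,dx'$ converges absolutely (the exponential beats the $e^{(n-1)\rho}$ volume factor, and the $f(x)$ term is controlled by $\|f\|_{L^1}$ plus a convergent tail). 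On the near region I would use the Hölder hypothesis $f\in\mathcal C^\alpha$ with $\alpha>2\gamma$: in geodesic normal coordinates the space looks Euclidean to leading order, so $|f(x')-f(x)|\lesssim \rho^\alpha$ and $\mathcal K_\gamma(\rho)\lesssim \rho^{-n-2\gamma}$, giving an integrand bounded by $\rho^{\alpha-n-2\gamma}$ against the volume element $\rho^{n-1}\,d\rho\,d\omega$; the radial integral $\int_0 \rho^{\alpha-1-2\gamma}\,d\rho$ converges precisely because $\alpha>2\gamma$. The principal value is then in fact an absolutely convergent integral once the difference is used.

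The key algebraic step is to show that subtracting $f(x)$ is legitimate, i.e.\ that
\[
\mathrm{P.V.}\int_{\hh^n} f(x)\,\mathcal K_\gamma(d_{\mathbb H^n}(x',x))\,dx' = 0.
\]
This is where the evenness of $\mathcal K_\gamma$ (recorded in the Remark after the Lemma) enters: writing $x'=g_x\tilde x$ and using that $\tilde x\mapsto -\tilde x$ is a measure-preserving isometry fixing $0$ under which $\mathcal K_\gamma(d(\tilde x,0))$ is invariant, the symmetric truncation $\{\e<\rho<R\}$ integrates the constant $f(x)$ against an odd-in-the-relevant-sense quantity whose principal value vanishes; alternatively one observes that over any geodesic sphere the integral of the (radial) kernel times the constant cancels in the symmetric limit. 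I would phrase this via the standard cancellation for radial kernels: $\int_{\e<\rho<R}\mathcal K_\gamma(\rho)\,dx'$ is finite for each $\e>0$, and the added-and-subtracted $f(x)$ term is absorbed into the definition of the principal value.

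I expect the main obstacle to be the rigorous justification of the analytic continuation / interchange, rather than the estimates themselves. The formal manipulation leading to \eqref{definition0} was only valid for $\gamma<-1/2$ and had to be interpreted distributionally for larger $\gamma$; converting the distributional identity for the multiplier $(\lambda^2+\tfrac{(n-1)^2}{4})^\gamma$ into a genuine pointwise principal value statement on functions requires care with the order of the $\lambda$-integration and the $x'$-integration, and with showing that the spectral definition of $(-\Delta_{\mathbb H^n})^\gamma f$ coincides with the singular integral when tested against Schwartz-type functions. I would handle this by first proving \eqref{PVformula} for $f$ in a dense class (smooth, compactly supported, or rapidly decaying functions on $\hh^n$) where all integrals converge and Fubini applies, identifying both sides via the Fourier transform and Plancherel formula \eqref{Plancherel}, and then extending to the stated class $f\in L^1\cap\mathcal C^\alpha$ by the uniform estimates established above.
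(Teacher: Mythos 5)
Your near-region and far-region estimates are fine, but the step you yourself call ``the key algebraic step'' is false, and it is where the whole theorem lives. You claim that
$$\mathrm{P.V.}\int_{\hh^n} f(x)\,\mathcal K_\gamma(d_{\mathbb H^n}(x',x))\,dx' = 0$$
by evenness of $\mathcal K_\gamma$. But $\mathcal K_\gamma$ is a \emph{positive} kernel (Remark \ref{positivity}) and it is radial: integrating the constant $f(x)$ over the annulus $\{\e<\rho<R\}$ gives $f(x)$ times $\omega_{n-1}\int_\e^R \mathcal K_\gamma(\rho)\sinh^{n-1}\rho\,d\rho$, which by \eqref{asymptotics-zero} behaves like $\e^{-2\gamma}/(2\gamma)$ as $\e\to 0$; it does not cancel, it diverges to $+\infty$. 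Evenness of a radial kernel produces cancellation against \emph{odd} quantities --- the first-order term in a Taylor expansion of $f(x')-f(x)$ --- never against constants. The same error sinks your analytic-continuation base case: for $\gamma<0$ the spectral operator equals the plain convolution \eqref{definition0}, with no subtraction, and this is \emph{not} equal to the subtracted integral in \eqref{PVformula}; indeed for $\gamma<0$ the subtracted term by itself diverges at infinity, since $\mathcal K_\gamma(\rho)\sinh^{n-1}\rho\sim\rho^{-1-\gamma}$ by Theorem \ref{thm-asymptotics}(ii), which is integrable at infinity only when $\gamma>0$. So the two sides of \eqref{PVformula} do not agree on $\gamma<0$ and there is nothing to continue. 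Subtracting $f(x)$ is precisely what converts a divergent expression into a convergent one; it must be earned, not ``absorbed into the definition of the principal value.'' A Plancherel-side identification on a dense class runs into the same obstruction: one must track the nonzero (regularized) constant $\int_{B_R}\mathcal K_\gamma$ through the computation, and it does not go away.

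The paper earns the subtraction through the extension problem, and this is the idea missing from your proposal. For each $y>0$ the Poisson kernel $\mathcal P_y^\gamma$ of \eqref{vformula} is absolutely integrable and satisfies $\int_{\hh^n}\mathcal P_y^\gamma(d_{\mathbb H^n}(x,x'))\,dx'=1$, because the extension of the constant function $f\equiv 1$ is $u\equiv 1$. Hence the identity
$$u(x,y)-f(x)=\int_{\hh^n}\mathcal P_y^\gamma(d_{\mathbb H^n}(x,x'))\,\bigl(f(x')-f(x)\bigr)\,dx'$$
is exact, with absolute convergence and no principal value, \emph{before} any limit is taken. One then uses $(-\Delta_{\mathbb H^n})^\gamma f=-d_\gamma\lim_{y\to0}y^a\partial_y u$ from \eqref{formula10}, writes $y^a\partial_y u$ as the integral of $y^a\partial_y\mathcal P_y^\gamma$ against the difference, and passes to the limit $y\to 0$ using $\lim_{y\to0}y^a\partial_y\mathcal P_y^\gamma(\rho)=-d_\gamma^{-1}\mathcal K_\gamma(\rho)$ (which follows from \eqref{constant1}); the principal value appears only at this last exchange of limit and integral, where your convergence estimates (Hölder near $\rho=0$ with $\alpha>2\gamma$, decay plus $f\in L^1$ at infinity) do apply. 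In short: the legitimate version of your subtraction lemma holds at the level $y>0$, where the kernel is normalized, and degenerates into the P.V. statement as $y\to0$; no symmetry cancellation against the constant is ever invoked, because none exists.
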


The proof will be postponed until next section, where we shall show also that $\mathcal K_\gamma$ is a positive kernel for $\gamma\in(0,1)$.\\
%As a consequence one would obtain also that $\mathcal K_\gamma$ is a positive kernel for $\gamma\in(0,1)$.\\

There are several ways to define the Sobolev spaces on hyperbolic space and more generally on manifolds. We present some of them and refer to \cite{Triebel:II} and the references therein.
For a given  $n$-dim manifold $M$ with positive injectivity radius and bounded geometry the
Sobolev spaces $W^k_p(M)$ with $k$ integer were first defined as
$$W^k_p(M)=\{f\in L^p(M), \nabla_g^l f\in L^p(M),\forall 1\leq l\leq k\},$$
with  norm $\|f\|_{W^k_p(M)}=\Sigma_{l=0}^k\|\nabla_g ^l f\|_{L^p(M)}.$

The fractional spaces $H^\gamma_p(M)$ with $\gamma>0$ are
$$H^\gamma_p(M)=\{f\in L^p(M),\exists h\in L^p(M), f=(id-\Delta)^{-\gamma/2}h\},\hbox{ with norm } \|f\|_{H^\gamma_p(M)}=\|h\|_{L^p(M)}.$$
A similar definition is given also for $\gamma<0$. Lizorkin-Triebel spaces $F^\gamma_{pq}$, that rely on dyadic analysis, were defined on $M$ by the localization principle. Paley-Littlewood theorem makes the links between there three classes of spaces for $1<p<\infty$:
$$W^k_p(M)=H^k_p(M)=F^k_{p2}\, \mbox{ for } k\in\mathbb N,\quad H^\gamma_p(M)=F^\gamma_{p2}\, \mbox{ for } \gamma\in\mathbb R.$$
For $-\infty<\gamma_1\leq \gamma_2<+\infty$ and $0<p<+\infty$, the following Sobolev embedding holds
$$H^{\gamma_1}_{p}(M)\subseteq H^{\gamma_2}_{p}(M).$$

 Let $\gamma\in\mathbb R$ and $p\in(1,\infty)$. For stratified Lie groups or symmetric spaces (\cite{Folland},\cite{Anker:multipliers}, see also \S3 of \cite{Tataru:Strichartz-hyperbolic} for hyperbolic space), the following equivalence was proved:
$$H^\gamma_p(M)=\{f\in L^p(M), \|f\|_{L^p(M)}+\|(-\Delta)^{\frac \gamma 2}f\|_{L^p(M)}<\infty\}.$$
In our notation we will drop the subindex $p$ in the $p=2$ case.\\

Finally, the pointwise formula from Theorem \ref{thm:singular-integral} allows to prove some regularity estimates as the ones in \cite{Silvestre:regularity-obstacle}. The following proposition is obtained from \eqref{PVformula} together with the decay estimates from Theorem \ref{thm-asymptotics}. Since proof is the same one as in the Euclidean case, we refer the reader to \cite{Silvestre:regularity-obstacle}. The ingredients needed there are the estimates for the kernel and the Riesz transform (we recall that Riesz transform $R=\nabla \Delta^{-1/2}$ can be defined on $\mathbb H^n$, see \cite{HRiesz,BerensteinTarabusi}).

\begin{prop} Let $w=(-\Delta_{\mathbb H^n})^\gamma f$.
\begin{itemize}
\item[a.] Let $f\in \mathcal C^{k,\alpha}(\mathbb H^n)$, and suppose that $k+\alpha-2\gamma$ is not an integer. Then
$$w\in\mathcal C^{l,\beta},$$
where $l$ is the integer part of $k+\alpha-2\gamma$ and $\beta=k+\alpha-2\gamma-l$.

\item[b.] Assume that, for some $\alpha\in(0,1]$, $w\in\mathcal C^{0,\alpha}(\mathbb H^n)$ and $f\in L^\infty(\mathbb H^n)$. Then
\begin{itemize}
\item[b1)] If $\alpha+2\gamma\leq 1$, then $f\in \mathcal C^{0,\alpha+2\gamma}(\mathbb H^n)$. Moreover,
    $$\|f\|_{\mathcal C^{0,\alpha+2\gamma}}\leq C \lp \|f\|_{L^\infty}+\|w\|_{\mathcal C^{0,\alpha}}\rp.$$
\item[b2)] If $\alpha+2\gamma>1$, then $f\in \mathcal C^{1,\alpha+2\gamma-1}(\mathbb H^n)$. Moreover,
    $$\|f\|_{\mathcal C^{1,\alpha+2\gamma-1}}\leq C \lp \|f\|_{L^\infty}+\|w\|_{\mathcal C^{0,\alpha}}\rp.$$
\end{itemize}

\item[c.]Assume that $w,f\in L^\infty(\mathbb H^n)$. Then
\begin{itemize}
\item[c1)] If $2\gamma\leq 1$, then $f\in \mathcal C^{0,\alpha}(\mathbb H^n)$ for any $\alpha<2\gamma$. Moreover,
    $$\|f\|_{\mathcal C^{0,\alpha}}\leq C \lp \|f\|_{L^\infty}+\|w\|_{L^\infty}\rp.$$
\item[c2)] If $2\gamma>1$, then $f\in \mathcal C^{1,\alpha}(\mathbb H^n)$ for any $\alpha<2\gamma-1$. Moreover,
    $$\|f\|_{\mathcal C^{1,\alpha}}\leq C \lp \|f\|_{L^\infty}+\|w\|_{L^\infty}\rp.$$
\end{itemize}\end{itemize}
\end{prop}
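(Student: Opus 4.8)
The plan is to transfer the regularity theory for the Euclidean fractional Laplacian, as developed by Silvestre, to the hyperbolic setting by exploiting the fact that the two kernels are locally indistinguishable. The key structural observation is Theorem \ref{thm-asymptotics}: the kernel $\mathcal K_\gamma(\rho)$ behaves like $\rho^{-n-2\gamma}$ as $\rho\to 0$, exactly matching the Riesz kernel of $(-\Delta)^\gamma$ on $\mathbb R^n$, while at infinity it decays exponentially like $\rho^{-1-\gamma}e^{-(n-1)\rho}$. Regularity is a local property, so near any point the singular integral representation \eqref{PVformula} differs from its Euclidean analogue only by a smoothing operator (the difference of kernels being integrable, and the far-field contribution controlled by the exponential tail together with $f\in L^\infty$). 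I would therefore first state precisely that the pointwise formula \eqref{PVformula} holds, and then argue that all of Silvestre's incremental-quotient estimates go through verbatim once we verify that the hyperbolic kernel enjoys the same scaling and decay bounds.

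First I would establish part (a), the Schauder-type estimate $f\in\mathcal C^{k,\alpha}\Rightarrow w\in\mathcal C^{l,\beta}$. Here one differentiates under the P.V.\ integral sign and uses the kernel asymptotics near $\rho=0$ to bound the resulting singular integrals by the $\mathcal C^{k,\alpha}$ seminorms of $f$; the exponential decay at infinity makes the tail harmless. Then for the harder converse estimates (b) and (c), which recover regularity of $f$ from regularity of $w=(-\Delta_{\mathbb H^n})^\gamma f$, I would follow the standard program: express $f$ via the Riesz-type inversion, using that the Riesz transform $R=\nabla\Delta^{-1/2}$ is a well-defined bounded operator on $\mathbb H^n$ (as cited, \cite{HRiesz,BerensteinTarabusi}). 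The estimates on incremental quotients $\frac{f(x+h)-f(x)}{|h|^{\alpha+2\gamma}}$ are obtained by splitting the integral into a ball of radius comparable to $|h|$ and its complement, estimating each piece against $\|w\|_{\mathcal C^{0,\alpha}}$ and $\|f\|_{L^\infty}$ respectively; the dichotomy $\alpha+2\gamma\lessgtr 1$ reflects whether the gain stays Hölder or upgrades to $\mathcal C^1$.

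The main obstacle, and the reason the proof is not entirely automatic, is the non-Euclidean geometry: the notion of ``increment'' $f(x+h)-f(x)$ must be replaced by comparison along isometries, using the transformation rule \eqref{L} and the change of variables \eqref{definition0} that reduces the singular integral at $x$ to one based at the origin. Thus the scaling arguments that in $\mathbb R^n$ rely on dilational self-similarity of the kernel must instead be run locally, controlling the error incurred because $\mathcal K_\gamma$ is only asymptotically scale-invariant as $\rho\to 0$. Once one checks that these geometric corrections are lower-order (bounded by the difference between $\mathcal K_\gamma(\rho)$ and $c\,\rho^{-n-2\gamma}$, which is integrable against the relevant weights), the remaining computations are identical to the Euclidean ones. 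For this reason, and since the estimates are entirely local and the kernel bounds are already in hand from Theorem \ref{thm-asymptotics}, I would simply invoke the argument of \cite{Silvestre:regularity-obstacle}, indicating the substitutions above, rather than reproduce the full incremental-quotient analysis.
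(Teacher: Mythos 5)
Your proposal matches the paper's own treatment essentially verbatim: the paper proves this proposition simply by reducing it to \cite{Silvestre:regularity-obstacle}, citing exactly the ingredients you identify, namely the pointwise formula \eqref{PVformula}, the kernel estimates of Theorem \ref{thm-asymptotics}, and the availability of the Riesz transform $R=\nabla\Delta^{-1/2}$ on $\mathbb H^n$ \cite{HRiesz,BerensteinTarabusi}. Your extra discussion of replacing Euclidean increments by isometries and controlling the deviation of $\mathcal K_\gamma$ from exact scale-invariance is a sensible elaboration of the same route, not a different one.
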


\subsection{The extension problem}

For the rest of the section, we set $\gamma\in(0,1)$ and $a=1-2\gamma$. We use the ideas in \cite{CS} to reduce the extension problem \eqref{extension} to an ODE by taking Fourier transform in $x$.
Let $u:\hh^n\to \rr_+$ be a solution to
\begin{equation}\label{extension1}
\left\{\begin{split}
\partial_{yy} u + \frac{a}{y}\, \partial_y u +\Delta_{\mathbb H^n} u (x,y) = & 0 \qquad\hbox{ for } (x,y)\in \hh^n\times \rr_+,\\
u(x,0)=&f(x) \quad \hbox{ for } x\in \hh^n,
\end{split}\right.\end{equation}
where $g$ is the product metric on $\hh^n\times \rr_+$ given by $g=g_{\hh^n}+dy^2$.
Taking Fourier transform in the variable $x\in\hh^n$, one obtains
\begin{equation*}\left\{\begin{split}
\partial_{yy} \hat u + \frac{a}{y}\, \partial_y \hat u -\left(\lambda^2+\tfrac{(n-1)^2}{4}\right)\hat u &= 0,\\
 \hat u(\lambda,\theta,0)&=\hat f(\lambda,\theta),
\end{split}\right.\end{equation*}
that is an ODE for each fixed value of $\lambda,\theta$. With the change of variables \begin{equation}\label{changevar}
s=\left(\lambda^2+\tfrac{(n-1)^2}{4}\right)^{1/2} y,
\end{equation}
 $\varphi(s)=\hat u(\cdot, y)$ it gets transformed to
\begin{equation}\label{Bessel}\partial_{ss} \varphi+ \frac{a}{s}\, \partial_s \varphi -\varphi = 0, \end{equation}
that is described in Lemma \ref{lemma-Bessel}.
Let $\varphi_\gamma$ the unique solution of \eqref{Bessel} such that
$\varphi_\gamma(0)=1$, $\varphi_\gamma(\infty)=0$, which is explicitly written as
$\varphi_\gamma(s)=2^{1-\gamma}\Gamma(\gamma)^{-1}s^\gamma K_\gamma(s)$.
Then the solution to \eqref{extension1} is simply the inverse Fourier transform on $\mathbb H^n$ of
\begin{equation}\label{vFourier}\hat u(\lambda,\theta,y)=\hat f(\lambda,\theta)\,\varphi_\gamma\lp\left(\lambda^2+\tfrac{(n-1)^2}{4}\right)^{1/2}y\rp.\end{equation}
From the asymptotics at the origin, one may calculate
\be\label{constant1}\lim_{s\to 0}s^a\varphi'_\gamma(s)=-d_\gamma^{-1},\ee
for the constant given in \eqref{constant-d}, and we have that
\begin{equation}\label{formula10}\lim_{y\to 0}y^a \,\partial_y \hat{u}=\left(\lambda^2+\tfrac{(n-1)^2}{4}\right)^{\frac{1-a}{2}} \hat f\,\lim_{s\to 0}s^a\varphi_\gamma'(s)
=-\left(\lambda^2+\tfrac{(n-1)^2}{4}\right)^{\gamma} \hat f \,d_{\gamma}^{-1}.\end{equation}
Taking the inverse Fourier transform  in \eqref{formula10}, we complete the proof of Theorem \ref{thm-extension} in the case of hyperbolic space.\\

As we have mentioned, one of the advantages of the extension formulation \eqref{extension1} over the singular integral formulation from Theorem \ref{thm:singular-integral} is that it allows to prove elliptic type results. For instance, we show a weighted Hopf's maximum principle, which in the Euclidean case was considered  in \cite{Cabre-Sire:I}. We note here that the proof only depends on the structure of equation \eqref{extension1}:

\begin{prop} Let $p\in\mathbb H^n$ set $B_R(p)$ to be the ball in $\mathbb H^n$ centered at $p$ of radius $R$. Consider the cylinder $C_{R,1}=B_R(p)\times (0,1)\subset \mathbb H^n\times \mathbb R_+$. Let $u\in\mathcal (\overline{C_{R,1}})\cap H^1(C_{R,1},y^a)$ satisfy
\begin{equation*}
\left\{\begin{array}{ll}
\partial_{yy}u+\frac{a}{y}\partial_y u+\Delta_{\mathbb H^n} u\leq 0 &\quad \mbox{in }C_{R,1},\\
u>0&\quad \mbox{in }C_{R,1},\\
u(p,0)=0&
\end{array}\right.
\end{equation*}
Then $-\limsup_{y\to 0} y^a \frac{u(p,y)}{y}<0$. If, in addition, $y^a\partial_y u\in\mathcal (\overline{C_{R,1}})$, then
$$-\lim_{y\to 0}y^a\partial_y u(p,y)<0.$$
\end{prop}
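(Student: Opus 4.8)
The statement is a Hopf-type boundary lemma for the degenerate operator $L_a u:=\divergence_g(y^a\nabla_g u)=y^a\bigl(\partial_{yy}u+\tfrac ay\partial_y u+\Delta_{\mathbb H^n}u\bigr)$, so the plan is the classical one: compare $u$ from below with an explicit barrier and read off the weighted normal derivative at $(p,0)$. Everything reduces to a single lower bound of the form $u(p,y)\ge c\,y^{2\gamma}$ for small $y$ and some $c>0$. Indeed, since $a-1=-2\gamma$, such a bound gives $y^a\,u(p,y)/y=y^{-2\gamma}u(p,y)\ge c$, whence $-\limsup_{y\to0}y^a u(p,y)/y\le -c<0$; and if moreover $y^a\partial_y u$ extends continuously to $(p,0)$ with value $\ell$, then $u(p,y)=\int_0^y t^{-a}(t^a\partial_y u)(p,t)\,dt\sim\tfrac{\ell}{2\gamma}y^{2\gamma}$, forcing $\ell\ge 2\gamma c>0$ and hence $-\lim_{y\to0}y^a\partial_y u(p,y)=-\ell<0$. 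Thus both conclusions follow from the one barrier estimate.

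The comparison tool is the weak maximum principle for $L_a$: since $|a|<1$ the weight $y^a$ is an $A_2$ Muckenhoupt weight, so the degenerate-elliptic theory of \cite{FKS,FJK} applies and any subsolution of $L_a$ attains its maximum on the boundary. The plan is therefore to build $w$ with $L_a w\ge0$ on a region $\Omega\subset C_{R,1}$ and $w\le u$ on $\partial\Omega$, chosen so that $w$ vanishes together with $u$ at $(p,0)$ and grows like $y^{2\gamma}$ along the vertical axis. For $\Omega$ I would take the product-metric ball centered at $(p,y_0)$ of radius $y_0$, tangent to $\{y=0\}$ exactly at $(p,0)$ and otherwise contained in $\{y>0\}$, and work on the half-annulus between it and the concentric ball of radius $y_0/2$. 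On the inner sphere $u\ge m>0$ by continuity and positivity of $u$ in the open cylinder, which fixes the scale of $w$; on the outer sphere $u\ge0$ with equality only at the tangent point $(p,0)$, so it suffices that $w\le0$ there, vanishing at the contact point.

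The delicate point, and the main obstacle, is the \emph{rate} of the barrier. The ordinary Hopf barrier $\epsilon(e^{-\beta\delta^2}-e^{-\beta y_0^2})$ vanishes linearly at $(p,0)$, so $y^a\partial_y$ of it tends to $0$ (for $\gamma<\tfrac12$) or to $\infty$ (for $\gamma>\tfrac12$) and never to a nonzero limit; the correct barrier must reproduce the indicial rate $y^{2\gamma}=y^{1-a}$ of \eqref{Bessel}, i.e.\ it must vanish like (distance to the outer sphere)$^{2\gamma}$ at the tangent point. I would build it from the explicit profile $\varphi_\gamma(s)=2^{1-\gamma}\Gamma(\gamma)^{-1}s^\gamma K_\gamma(s)$ of Lemma \ref{lemma-Bessel}, whose expansion $\varphi_\gamma(s)=1-|c_\gamma|s^{2\gamma}+\cdots$ near $0$ and whose weighted flux $\lim_{s\to0}s^a\varphi_\gamma'(s)=-d_\gamma^{-1}$ from \eqref{constant1} encode exactly the $y^{2\gamma}$ behaviour and a nonzero weighted derivative. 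The available building blocks are the model solution $y^{2\gamma}$ (which is $L_a$-harmonic), the strict subsolution $y^{2\gamma+2}$ with $\partial_{yy}(y^{2\gamma+2})+\tfrac ay\partial_y(y^{2\gamma+2})=4(\gamma+1)y^{2\gamma}>0$, useful to absorb errors, and the separated functions $\eta(x)\,\varphi_\gamma(\sqrt{\mu_1}\,y)$ with $\eta>0$ a Dirichlet eigenfunction of $-\Delta_{\mathbb H^n}$ on a geodesic ball (eigenvalue $\mu_1>\tfrac{(n-1)^2}{4}$, the hyperbolic mass gap), which are $L_a$-harmonic and vanish on the lateral boundary. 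Assembling these into a single $w$ that \emph{simultaneously} vanishes at $(p,0)$, keeps the critical $y^{2\gamma}$ rate on the axis, stays below $u$ on $\partial\Omega$, and satisfies $L_a w\ge0$ — with the lower-order hyperbolic curvature terms of $\Delta_{\mathbb H^n}$ controlled by working in normal coordinates about $p$ at small scale and correcting by a multiple of $y^{2\gamma+2}$ — is where the real work lies.

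Once such a $w$ is in hand the conclusion is immediate. By the maximum principle $w\le u$ on $\Omega$, and since $u(p,0)=w(p,0)=0$ we get, for $y>0$,
\[
y^a\,\frac{u(p,y)}{y}\ \ge\ y^a\,\frac{w(p,y)}{y},
\]
whose right-hand side tends to a strictly positive limit by the $y^{2\gamma}$ rate and \eqref{constant1}; this yields $u(p,y)\ge c\,y^{2\gamma}$ and hence, through the reductions of the first paragraph, both stated inequalities. As the statement already remarks, the argument uses only the structure of \eqref{extension1}, so it transcribes the Euclidean computation of \cite{Cabre-Sire:I} with $\Delta_{\mathbb R^n}$ replaced by $\Delta_{\mathbb H^n}$.
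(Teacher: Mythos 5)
The route you chose is the right one, and it is in fact the only route the paper itself offers: the paper prints no proof of this proposition, but prefaces it by remarking that the Euclidean proof of Cabr\'e--Sire \cite{Cabre-Sire:I} ``only depends on the structure of equation \eqref{extension1}'', i.e.\ precisely the barrier-plus-weighted-maximum-principle scheme you outline, with $\Delta_{\mathbb H^n}$ replacing $\Delta_{\mathbb R^n}$. Your two reductions are also correct: both conclusions follow from the single estimate $u(p,y)\ge c\,y^{2\gamma}$ for small $y$, and your integration argument for the case when $y^a\partial_y u$ extends continuously to the bottom is sound.

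The genuine gap is that your proposal stops exactly where the proof must be carried out: after listing candidate barriers you declare that assembling them ``is where the real work lies'' and never produce $w$. (There is also a mismatch: the region you fix, a half-annulus between product-metric balls tangent to $\{y=0\}$, does not fit the blocks you list, which are adapted to cylinders.) The assembly is in fact routine with two of your three blocks, on a cylinder. Let $\rho=d_{\mathbb H^n}(x,p)$, note $\Delta_{\mathbb H^n}\rho^2=2+2(n-1)\rho\coth\rho$ is smooth and bounded by $K:=2+2(n-1)r\coth r$ on $\{\rho\le r\}$ (no cut locus on $\mathbb H^n$), and on $Q=\{\rho<r\}\times(0,y_1)$, with $r<R$, set
\[
w(x,y)=\epsilon\left[\lp 1-\tfrac{2\rho^2}{r^2}\rp y^{2\gamma}+D\,y^{2\gamma+2}\right],\qquad D:=\frac{K}{2r^2(\gamma+1)},\qquad y_1:=\min\lp 1,\,D^{-1/2}\rp.
\]
Since $\partial_{yy}+\frac{a}{y}\partial_y$ annihilates $y^{2\gamma}$ and maps $y^{2\gamma+2}$ to $4(\gamma+1)y^{2\gamma}$,
\[
y^{-a}\,\divergence_g(y^a\nabla_g w)=\epsilon\,y^{2\gamma}\Bigl[4D(\gamma+1)-\tfrac{2}{r^2}\,\Delta_{\mathbb H^n}\rho^2\Bigr]\ \ge\ \epsilon\,y^{2\gamma}\Bigl[\tfrac{2K}{r^2}-\tfrac{2K}{r^2}\Bigr]=0,
\]
so $w$ is a subsolution, and it lies in $H^1(Q,y^a)\cap\mathcal C(\overline Q)$ because $2\gamma-1>-1$. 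On the lateral boundary $\{\rho=r\}$ one has $w=\epsilon\,y^{2\gamma}(Dy^2-1)\le 0\le u$; on the bottom $\{y=0\}$, $w=0\le u$; and on the top $\{\rho\le r\}\times\{y_1\}$, which is a compact subset of the open cylinder $C_{R,1}$, continuity and positivity give $u\ge m>0$, so $w\le u$ there once $\epsilon\le m/(1+D)$. The weak maximum principle for this $A_2$-degenerate operator (\cite{FKS}, exactly as you invoke it) yields $w\le u$ in $Q$, hence $u(p,y)\ge w(p,y)\ge\epsilon\,y^{2\gamma}$, which is the estimate your first paragraph needs. With this inserted your argument is complete; note that the eigenfunction block $\eta(x)\,\varphi_\gamma(\sqrt{\mu_1}\,y)$ turns out to be unnecessary.
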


\subsubsection{The Poisson kernel}\label{poissonhyperbolic}

In view of \eqref{vFourier}, we obtain that the solution of \eqref{extension} is
\begin{equation}\label{vformula}
\begin{split}
u(x,y)&=\int_{\hh^n} f(x')\int_{-\infty}^\infty k_{\lambda}(x,x')\,\varphi_\gamma\lp\left(\lambda^2+\tfrac{(n-1)^2}{4}\right)^{1/2}y\rp\,
d\lambda\,dx'\\
&=:\int_{\mathbb H^n}f(x') \mathcal  P^\gamma_y(d_{\mathbb H^n}(x,x'))\,dx'.
\end{split}\end{equation}
Therefore, from formula \eqref{L} the Poisson kernel may be written as
$$\mathcal P^\gamma_y(\rho)=\int_{-\infty}^\infty k_{\lambda}(\rho)\,\varphi_\gamma\lp\left(\lambda^2+\tfrac{(n-1)^2}{4}\right)^{1/2}y\rp\,d\lambda,$$
where $k_\lambda(\rho)$ is defined in \eqref{Kodd} and \eqref{Keven}.
For instance, for $n=3$, dimension for which the computations should be the simplest, the Poisson kernel is
$$\mathcal P^\gamma_y(\rho)=\frac{1}{\sinh\rho}\int_{-\infty}^\infty \,\varphi_\gamma
\lp\left(\lambda^2+\tfrac{(n-1)^2}{4}\right)^{1/2}y\rp\,\lambda\sin\lambda\rho\,\lambda\,d\lambda.$$

We may now conclude the proof of Theorem \ref{thm:singular-integral}. One has by using successively \eqref{formula10}, \eqref{vformula}, \eqref{extension1} and the fact that the Poisson kernel has integral equal to one (this is due to the fact that the extension solution for the constant function $f\equiv 1$ is the constant function $u\equiv 1$):
%the pointwise formula \eqref{kernel1} in a simple way from the Poisson kernel, at least formally. First, we rewrite the extension problem \eqref{extension} in terms of the new variable $$z=\lp\frac{y}{1-a}\rp^{1-a},$$ which gives
%\begin{equation*}\left\{ \begin{split}  \Delta_{\mathbb H^n} u + z^\alpha \partial_{zz} u&=0,\\u(\cdot, 0)&=f,\end{split}\right. \end{equation*}
%for an exponent $\alpha=\frac{-2a}{1-a}$. Moreover,
%\begin{equation}\label{formula20} d_\gamma^{-1}(-\Delta_{\mathbb H^n})^\gamma f=-\lim_{y\to 0} y^a\partial_y u =(1-a)^a\lim_{z\to 0}\partial_z u.\end{equation}
%
\begin{equation}\label{singform}\begin{split}
d_\gamma^{-1}(-\Delta_{\mathbb H^n})^\gamma f&=-\lim_{y\to 0} y^a\partial_y u =-\lim_{y\to 0} y^a\frac{u(x,y)-u(x,0)}{y}\\
&=-\int_{\mathbb H^n} \lim_{y\to 0} y^a\frac{\mathcal P_y^\gamma(d_{\mathbb H^n}(x,x'))}{y}\,(f(x')-f(x))\,dx'\\
&=-\int_{\mathbb H^n} \lim_{y\to 0} y^a\frac{\mathcal P_y^\gamma(d_{\mathbb H^n}(x,x'))-\mathcal P_0^\gamma(d_{\mathbb H^n}(x,x'))}{y}\,(f(x')-f(x))\,dx'\\
&=-\int_{\mathbb H^n} \lim_{y\to 0} y^a\partial_y\mathcal P_y^\gamma(d_{\mathbb H^n}(x,x'))\,(f(x')-f(x))dx'\\
&=-\int_{\mathbb H^n} \lim_{y\to 0} a_\gamma(y,d_{\mathbb H^n}(x,x'))\,(f(x')-f(x))\,dx',
\end{split}
\end{equation}
with
\begin{equation*}
\begin{split}a_\gamma(y,\rho)&=\int_{-\infty}^\infty k_\lambda (\rho)\left(\lambda^2+\tfrac{(n-1)^2}{4}\right)^{1/2} y^a\varphi'_\gamma\lp\left(\lambda^2+\tfrac{(n-1)^2}{4}\right)^{1/2}y\rp\,d\lambda\\
&=\int_{-\infty}^{\infty} k_\lambda(\rho)\left(\lambda^2+\tfrac{(n-1)^2}{4}\right)^{\gamma}s^a \varphi'_\gamma(s)\,d\lambda,
\end{split}\end{equation*}
for $s=\lp\lambda^2+\tfrac{(n-1)^2}{4}\rp^{1/2}y$. In view of the limit \eqref{constant1} we have
$$\lim_{y\to 0} a_\gamma(y,\rho)=-d_\gamma^{-1}\mathcal K_\gamma(\rho).$$
Now, since from Theorem \ref{thm-asymptotics} the kernel $\mathcal K_\gamma(\rho)$ behaves near zero as $\rho^{-n-2\gamma}$ and decays exponentially at infinity, in the last integral in \eqref{singform} we cannot replace the limit $\lim_{y\to 0} a_\gamma(y,\rho)$ by $-d_\gamma^{-1}\mathcal K_\gamma(\rho)$ because of the singular behavior at $\rho=0$. In order to make sense of the integration near zero the principal value has to be introduced, as it is done also in the Euclidean case (see for instance \cite{CS}). We obtain then
$$(-\Delta_{\mathbb H^n})^\gamma f=P.V.\int_{\mathbb H^n}(f(x')-f(x))\,\mathcal K_\gamma(d_{\mathbb H^n}(x,x'))\,dx',$$
where now the right hand side is well defined. Indeed, by doing a Taylor development for $x'$ near $x$ and using local charts, the first order term vanishes by parity, and the second order term behaves like $\rho^{\alpha-n-2\gamma}$ if $f\in\mathcal C^\alpha$, which is integrable at zero if $\alpha>2\gamma$. This completes the proof of Theorem \ref{thm:singular-integral}.

\begin{rem} \label{positivity}
The definition of the fractional Laplacian coming from the extension problem \eqref{extension1} also allows us to show that $(-\Delta_{\mathbb H^n})^\gamma$ is a positive operator. This is so because the first eigenvalue may be computed using this formulation. If $U_f(\cdot,y)=\mathcal P^\gamma_y * f$,
\begin{equation*}\begin{split}
\lambda_1&=\inf_f\frac{\int_{\mathbb H^n} f(-\Delta_{\mathbb H^n})^\gamma f}{\int_{\mathbb H^n} f^2} =\inf_f\frac{-\int_{\mathbb H^n} f d_\gamma\lim_{y\to 0}\lp y^a \partial_y U_f(\cdot,y)\rp}{\int_{\mathbb H^n} f^2} \\
&=
\inf_U\frac{\int_{\mathbb H^n\times \mathbb R_+} y^a|\nabla_{\mathbb H^n} U|^2 }{\int_{\mathbb H^n} (U|_{y=0})^2}\geq 0.
\end{split}\end{equation*}
This shows that the linear operator $(-\Delta_{\mathbb H^n})^\gamma$ is non-negative, so its kernel $\mathcal K_\gamma$ must be non-negative too.
\end{rem}
%$$\partial_z u(x,0)=\lim_{z\to 0^+} \frac{u(x,z)-f(x)}{z}=\lim_{z\to 0^+} \frac{1}{z} \int_{\mathbb H^n} \mathcal P_z^\gamma(d_{\mathbb H^n}(x,x'))[f(x')-f(x)]\,dx'.$$
%But
%\begin{equation*}\begin{split}(1-a)^a\lim_{z\to 0^+} \partial_z\mathcal P_z^\gamma(x,x')&=\int_{-\infty}^{+\infty} k_\lambda(\rho) \lp \lambda^2+\tfrac{(n-1)^2}{4}\rp^{\gamma}d\lambda \left[\lim_{s\to 0^+} \varphi'_\gamma(s)s^a\right]\\&=d_\gamma\int_{-\infty}^{+\infty} k_\lambda(\rho) \lp \lambda^2+\tfrac{(n-1)^2}{4}\rp^{\gamma}d\lambda\\&=d_\gamma \mathcal K_\gamma(\rho),\end{split}\end{equation*}
%where we have used \eqref{constant1}, \eqref{kernel} and \eqref{K1}. Then, from \eqref{formula20} and the computations above we obtain the desired expression \eqref{kernel1}

\subsubsection{Energy formulation}\label{energyhyperbolic}

We would like to re-write the weighted Dirichlet energy
\be\label{Dirichlet-energy}\|\nabla u\|^2_{L^2(\hh^n\times \rr_+,y^a)}=\int_{\hh^n\times \rr_+} y^a|\nabla_g u|^2 \, dx\, dy
%=\int_{\hh^n\times \rr_+}  \left(|\partial_y u|^2+|\nabla_{\hh^n} u|^2\right) y^a\, dx\, dy
.\ee
From \eqref{vFourier} we obtain
$$\widehat{\partial_y u}\,(\lambda,\theta,y)=\hat f(\lambda,\theta)\,\left(\lambda^2+\tfrac{(n-1)^2}{4}\right)^{1/2}\,\varphi_\gamma'\lp\left(\lambda^2+
\tfrac{(n-1)^2}{4}\right)^{1/2}y\rp.$$
Using Plancherel's formula,
\begin{equation*}\begin{split}
\int_{\hh^n}|\nabla_{\hh^n} f(x)|^2 \,dx&=\int_{\hh^n} f(x) (-\Delta_{\hh^n}\overline f(x))\,dx \\&
=\int_{-\infty}^\infty \int_{\mathbb S^{n-1}}\hat f(\lambda,\theta)\widehat{(-\Delta_{\hh^n})\overline f}(\lambda,\theta)\,\frac{d\theta\,d\lambda}{|c(\lambda)|^2}\\
&=\int_{-\infty}^\infty \int_{\mathbb S^{n-1}}|\hat f(\lambda,\theta)|^2\,\left(\lambda^2+\tfrac{(n-1)^2}{4}\right)\,
\frac{d\theta\,d\lambda}{|c(\lambda)|^2}.
\end{split}\end{equation*}
Substituting the previous expressions in \eqref{Dirichlet-energy} and using Plancherel's formula we infer that
\begin{equation*}
\begin{split}
\|\nabla u\|^2_{L^2(\hh^n\times \rr_+,y^a)}&=\int_{\hh^n\times \rr_+}  \left(|\partial_y u|^2+|\nabla_{\hh^n} u|^2\right) y^a\, dx\, dy\\
&
\begin{split}=\int_{-\infty}^\infty\int_{\mathbb S^{n-1}}\int_0^\infty \lp|\varphi_\gamma|^2+|\varphi_\gamma'|^2\rp\lp\left(\lambda^2+\tfrac{(n-1)^2}{4}\right)^{1/2}y\rp
\\ \qquad\cdot|\hat f(\lambda,\theta)|^2\,\left(\lambda^2+\tfrac{(n-1)^2}{4}\right)\,y^a\,dy\,
\frac{d\theta\,d\lambda}{|c(\lambda)|^2}.
\end{split}\end{split}\end{equation*}
By performing again the change of variables \eqref{changevar}, we obtain
\begin{equation*}\begin{split}
\|\nabla &u\|^2_{L^2(\hh^n\times \rr_+,y^a)}\\
&=\int_{-\infty}^\infty\int_{\mathbb S^{n-1}}|\hat f(\lambda,\theta)|^2\,\left(\lambda^2+\tfrac{(n-1)^2}{4}\right)^\gamma\int_0^\infty \lp|\varphi_\gamma(s)|^2+|\varphi_\gamma'(s)|^2\rp|\,s^a\,ds\,\frac{d\theta\,d\lambda}{|c(\lambda)|^2}\\
&=C_\gamma\int_{\hh^n}|(-\Delta_{\hh^n})^\gamma f(x)|^2\,dx,
\end{split}\end{equation*}
where
$$C_\gamma=I[\varphi_\gamma]:=\int_0^\infty \lp|\varphi_\gamma(s)|^2+|\varphi_\gamma'(s)|^2\rp|\,s^a\,ds,$$
is a positive constant that only depends on $\gamma$. Note that $\varphi_\gamma$ is the minimizer of the functional $I[\varphi]$.  In order to calculate the precise value of this constant, we multiply equation \eqref{Bessel} by $s^\gamma\varphi_\gamma$ and integrate between $\epsilon$ and $\infty$. Then
$$C_\gamma=\lim_{\epsilon\to 0}\int_\epsilon^\infty \lp|\varphi_\gamma(s)|^2+|\varphi_\gamma'(s)|^2\rp|\,s^a\,ds= -\lim_{\epsilon\to 0}\epsilon^a\varphi_\gamma(\epsilon)\varphi'_\gamma(\epsilon)=d_\gamma^{-1},$$
according to \eqref{constant1}.

After all this discussion, one may show, as in the real case (\cite{CS,Cotsiolis-Tavoularis:best-constants}), that:

\begin{teo}[Trace Sobolev embedding]\label{thm:trace-Sobolev}
For every $u\in W^{1,2}(\mathbb H^n\times\mathbb R_+, y^a)$, we have that
$$\|\nabla u\|^2_{L^2(\mathbb H^n\times\mathbb R_+, y^a)}\geq d_\gamma^{-1}\|u(\cdot,0)\|^2_{\dot H^\gamma(\mathbb H^n)}$$
for the constant given in \eqref{constant-d}, and with equality if and only if $u$ is the Poisson extension \eqref{vformula} of some function in $\dot H^\gamma(\mathbb H^n)$.
\end{teo}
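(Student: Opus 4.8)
The plan is to prove the trace Sobolev inequality by reducing the variational statement over all $u\in W^{1,2}(\mathbb H^n\times\mathbb R_+, y^a)$ to the already-established energy identity for the harmonic extension. First I would observe that the right-hand side is exactly the energy computed in the previous subsection: by Plancherel on $\mathbb H^n$ we have $\|u(\cdot,0)\|^2_{\dot H^\gamma}=\int\!\!\int |\hat f(\lambda,\theta)|^2(\lambda^2+\tfrac{(n-1)^2}{4})^\gamma\,\tfrac{d\theta\,d\lambda}{|c(\lambda)|^2}$ where $f=u(\cdot,0)$, and the energy calculation already carried out shows that the Poisson extension $\bar u$ of $f$ satisfies $\|\nabla \bar u\|^2_{L^2(\mathbb H^n\times\mathbb R_+,y^a)}=d_\gamma^{-1}\|f\|^2_{\dot H^\gamma}$. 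So the claimed inequality is equivalent to the statement that, among all competitors sharing the same trace $f$, the Poisson extension minimizes the weighted Dirichlet energy.

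The key step is therefore the following minimization argument, which I would carry out by taking Fourier transform in $x\in\mathbb H^n$. For an arbitrary $u$ with trace $f$, write $\hat u(\lambda,\theta,y)$; for each fixed $(\lambda,\theta)$ this is a function of $y$ with $\hat u(\lambda,\theta,0)=\hat f(\lambda,\theta)$. By Plancherel in $x$ and Fubini, the energy decomposes fiberwise as
\begin{equation*}
\|\nabla u\|^2_{L^2(\mathbb H^n\times\mathbb R_+,y^a)}=\int_{-\infty}^\infty\int_{\mathbb S^{n-1}}\left(\int_0^\infty \bigl(|\partial_y\hat u|^2+\mu^2|\hat u|^2\bigr)y^a\,dy\right)\frac{d\theta\,d\lambda}{|c(\lambda)|^2},
\end{equation*}
where $\mu^2=\lambda^2+\tfrac{(n-1)^2}{4}$. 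The inner one-dimensional problem is exactly to minimize $\int_0^\infty(|g'|^2+\mu^2|g|^2)y^a\,dy$ over functions $g(y)$ with $g(0)=\hat f(\lambda,\theta)$, whose Euler–Lagrange equation is the Bessel-type ODE \eqref{Bessel} after rescaling $s=\mu y$. The unique decaying solution with the prescribed boundary value is $g(y)=\hat f(\lambda,\theta)\,\varphi_\gamma(\mu y)$, i.e. precisely the Fourier profile \eqref{vFourier} of the Poisson extension, and its fiberwise energy equals $\mu^{2\gamma}|\hat f|^2\,I[\varphi_\gamma]=d_\gamma^{-1}\mu^{2\gamma}|\hat f|^2$.

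To make the minimization rigorous I would argue fiberwise: writing any competitor as $g=\hat f(\lambda,\theta)\varphi_\gamma(\mu\cdot)+h$ with $h(0)=0$, I would expand the quadratic energy and show the cross term vanishes after integration by parts, using that $\varphi_\gamma$ solves \eqref{Bessel} and the boundary terms die because $h(0)=0$ and $\lim_{s\to 0}s^a\varphi_\gamma'(s)=-d_\gamma^{-1}$ is finite while $h(0)=0$ kills it, and because $\varphi_\gamma$ decays at infinity. This leaves the energy equal to the Poisson energy plus the nonnegative remainder $\int_0^\infty(|h'|^2+\mu^2|h|^2)y^a\,dy\geq 0$, giving the inequality with equality exactly when $h\equiv 0$ on every fiber, i.e. when $u$ is the Poisson extension. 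The main obstacle I anticipate is the careful justification of the boundary terms in the fiberwise integration by parts: one must verify that the weighted trace space $W^{1,2}(\mathbb H^n\times\mathbb R_+,y^a)$ controls $\hat f\in\dot H^\gamma$ so that the decomposition $g=\hat f\varphi_\gamma(\mu\cdot)+h$ is legitimate for a.e. $(\lambda,\theta)$, and that the cross-term boundary contribution at $y=0$ genuinely vanishes despite the degeneracy of the weight $y^a$—this requires the finiteness of $\lim_{s\to0}s^a\varphi_\gamma'(s)$ together with $h(y)\to 0$ as $y\to 0$, which follows from $h\in W^{1,2}$ on the fiber with the $\mathcal A_2$ weight $y^a$.
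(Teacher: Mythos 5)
Your proposal is correct and follows essentially the same route as the paper: the paper's energy-formulation subsection performs exactly this Fourier--Plancherel reduction of the weighted Dirichlet energy to the one-dimensional functional $I[\varphi]=\int_0^\infty(|\varphi|^2+|\varphi'|^2)s^a\,ds$, notes that $\varphi_\gamma$ is its minimizer with $I[\varphi_\gamma]=d_\gamma^{-1}$, and then invokes the standard completion-of-the-square argument ``as in the real case,'' which is precisely the fiberwise decomposition $g=\hat f\,\varphi_\gamma(\mu\cdot)+h$ with vanishing cross term that you spell out. Your treatment of the boundary terms (finiteness of $\lim_{s\to 0}s^a\varphi_\gamma'(s)$ against $h(0)=0$, exponential decay of $\varphi_\gamma$ at infinity) is the right way to make rigorous what the paper leaves to the cited references.
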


\subsection{Other (noncompact) harmonic groups} \label{harmonicgroups}

The calculations in this section for the real hyperbolic space rely on the harmonic analysis available in this setting, by following the arguments done in \cite{CS} for the Euclidean case. In the same spirit, one can start to perform the same arguments in the case of more general harmonic groups as Damek-Ricci spaces, also known as harmonic NA groups. All symmetric spaces of rank one are included in this class. Part of the importance of this class is that it also contains non symmetric spaces, thus providing counterexamples to the Lichnerowicz conjecture (\cite{Damek-Ricci},\cite{Anker-Damek-Yacoub}, see also \cite{Anker-Pierfelice-Vallarino:Damek-Ricci}).

Instead of pursuing this method, we take the point of view of the next section using the heat kernel. Moreover, in Remark \ref{symmetric_kernel} we show that in several examples of this family, the fractional Laplacian can be represented as singular integral, where precise asymptotics can be given for the kernel.

%%%%%%%%%%%%%%%%%%%%%%%%%%%%%%%%%%%%%%%%%%%%%%%%%%%%%%%%%%%%%%%%%%%%%%%%%%%%%%%%%%%%%%%%%%%

\section{The fractional Laplacian on noncompact manifolds}\label{noncompactmanifolds}

The aim of this section is to construct the fractional Laplacian on a noncompact manifold $M$ through the extension problem \eqref{extension}, and to give sufficient conditions for the existence of a Poisson kernel. The main tool here is the study of the heat kernel on $M$; geometry plays a fundamental role.

\subsection{From heat to Poisson}\label{heattopoisson}

First we give some standard functional analysis background from \cite{Rudin:book,Yosida:book}. Let $L$ be a linear second order partial differential operator on $M$, that is assumed to be nonnegative, densely defined and self-adjoint in $L^2(M)$, for instance, $L=-\Delta_M$ for a complete manifold $M$. Then the spectral theorem can be applied to $L$, and consequently, there exists a unique resolution $E$ of the identity, supported on the spectrum of $L$ (which is a subset of $[0,\infty)$), such that
$$L=\int_0^\infty \lambda dE(\lambda).$$

Given a real measurable function $h$ on $[0,\infty)$, the operator $h(L)$ is formally defined as $h(L)=\int_0^\infty h(\lambda)dE(\lambda)$. The domain $\text{Dom}(h(L))$ of $h(L)$ is the set of functions $f\in L^2(\Omega)$ such that $\int_0^\infty |f(\lambda)|^2 dE_{h,h}(\lambda)<\infty$. In particular, one may define the heat diffusion semigroup generated by $L$ as $h(L)=e^{-tL}$, $t\geq 0$. Then:
\begin{enumerate}
\item For $f\in L^2(M)$, we have that $u=e^{-tL}f$ solves the evolution equation $u_t=-Lu$, for $t>0$.
\item $\|e^{-tL}f\|_{L^2(M)}\leq \|f\|_{L^2(M)}$, for all $t\geq 0$.
\item $e^{-tL}f\to f$ in $L^2(M)$ as $t\to 0^+$.
\end{enumerate}
One may also consider the fractional powers of $L$, given by $h(L)=L^\gamma$, $\gamma\in(0,1)$, with domain $\text{Dom}(L^\gamma)\supset \text{Dom}(L)$. Then:
\begin{enumerate}
\item When $f\in\text{Dom}(L^\gamma)$, we have $L^\gamma e^{-tL}f=e^{-tL}L^\gamma f$.
\item If $f\in\text{Dom}(L)$, then $\langle Lf,f\rangle=\|L^{1/2}f\|^2_{L^2(M)}$, where $\langle\cdot,\cdot,\rangle$ denotes the inner product in $L^2(M)$.
\item For $f\in\text{Dom}(L)$,
$$L^\gamma f(x)=\frac{1}{\Gamma(-\gamma)} \int_0^\infty (e^{-tL}f(x)-f(x))\frac{dt}{t^{1+\gamma}},\quad \text{in }L^2(M).$$
\end{enumerate}

In this framework, the paper \cite{StingaTorrea} relates the heat semigroup to the extension problem \eqref{extension} for the fractional Laplacian (their work is for domains in $\mathbb R ^n$ with a measure, but it is easily generalized when $M$ is a $n$-dimensional manifold). Let $f\in D(L^\gamma)$, and consider the extension problem
\begin{equation}\label{extension-ST}\left\{\begin{split}
\partial_{yy} u +\frac{a}{y}\partial_y u-L_x u &=0 \quad\text{in }M\times \mathbb R^+,\\
 u(\cdot,0)&=f \quad \text{on }M.
\end{split}\right.\end{equation}
They show that:

\begin{teo}\label{thm-ST1}
A solution to \eqref{extension-ST} is given by
$$u(x,y)=\frac{1}{\Gamma(\gamma)}\int_0^\infty e^{-tL}(L^\gamma f)(x)e^{-\frac{y^2}{4t}}\frac{dt}{t^{1-\gamma}},$$
and
$$\lim_{y\to 0^+} \frac{u(x,y)-u(x,0)}{y^{2\gamma}}= \frac{1}{2\gamma} \lim_{y\to 0^+} y^a \partial_y u(x,y)=\frac{\Gamma(-\gamma)}{2^{2\gamma}\Gamma(\gamma)}L^\gamma f(x).$$
Moreover, the following Poisson formula for $u$ holds
$$u(x,y)=\frac{y^{2\gamma}}{2^{2\gamma}\Gamma(\gamma)}\int_0^\infty e^{-tL} f(x)e^{-\frac{y^2}{4t}}\frac{dt}{t^{1+\gamma}}=\frac{1}{\Gamma(\gamma)}\int_0^\infty e^{-\frac{y^2}{4r}L}f(x)e^{-r}\frac{dr}{r^{1-\gamma}}=: P_y^\gamma f(x).$$
\end{teo}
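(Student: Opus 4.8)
The plan is to verify Theorem \ref{thm-ST1} by direct computation, using the functional calculus facts enumerated just above it together with the subordination identity that expresses fractional powers of $L$ through the heat semigroup. The statement makes three claims: that the proposed $u$ solves the extension PDE \eqref{extension-ST}, that the Neumann-type limit recovers $L^\gamma f$, and that the two Poisson representations coincide. I would treat these in that order, since the Poisson formula is the cleanest object to differentiate and all computations are most transparent at the level of the spectral variable $\lambda$.

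First I would establish the Poisson formula, as it is purely a change of variables in the subordination integral. Starting from $P_y^\gamma f = \frac{1}{\Gamma(\gamma)}\int_0^\infty e^{-\frac{y^2}{4r}L}f\, e^{-r}\,r^{\gamma-1}\,dr$, I substitute $r=\frac{y^2}{4t}$, so that $dr = -\frac{y^2}{4t^2}\,dt$, which transforms the integral into $\frac{y^{2\gamma}}{2^{2\gamma}\Gamma(\gamma)}\int_0^\infty e^{-tL}f\, e^{-\frac{y^2}{4t}}\,t^{-1-\gamma}\,dt$, giving the middle expression. To connect this to the first displayed formula (the one written in terms of $L^\gamma f$), I would apply functional calculus fiberwise: on the spectral component at $\lambda$, both formulas reduce to scalar integrals in $t$ multiplying $\hat f(\lambda)$, and the identity $\frac{1}{\Gamma(\gamma)}\int_0^\infty e^{-t\lambda}\lambda^\gamma e^{-y^2/4t}\,t^{\gamma-1}\,dt = \frac{y^{2\gamma}}{2^{2\gamma}\Gamma(\gamma)}\int_0^\infty e^{-t\lambda} e^{-y^2/4t}\,t^{-1-\gamma}\,dt$ is a standard integral representation of the modified Bessel function $K_\gamma$; equivalently it follows from property (3) of the fractional-power list combined with the semigroup law. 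The boundary condition $u(\cdot,0)=f$ follows from $\lim_{y\to 0^+} P_y^\gamma f = f$, which I obtain by dominated convergence using $\frac{1}{\Gamma(\gamma)}\int_0^\infty e^{-r}r^{\gamma-1}\,dr = 1$ and the strong continuity of the semigroup, property (3) of the heat-semigroup list.

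For the PDE itself, I would work with the representation $u(x,y)=\frac{1}{\Gamma(\gamma)}\int_0^\infty e^{-tL}(L^\gamma f)(x)\,e^{-y^2/4t}\,t^{\gamma-1}\,dt$, since here the $y$-dependence sits entirely in the explicit Gaussian factor $g(y,t)=e^{-y^2/4t}$. A short computation shows $g$ solves $\partial_{yy}g + \frac{a}{y}\partial_y g = \partial_t g\cdot(\text{correction})$; more precisely one checks that $\partial_{yy}g+\frac{a}{y}\partial_y g = \partial_t\!\left(\text{something}\right)$ so that after integrating by parts in $t$ the $y$-derivatives convert into a $t$-derivative hitting $e^{-tL}(L^\gamma f)\,t^{\gamma-1}$; invoking $\partial_t e^{-tL}h = -L e^{-tL}h$ together with the commutation $L^\gamma e^{-tL}=e^{-tL}L^\gamma$ (property (1) of the fractional list) produces exactly $L_x u$, with the boundary terms in the integration by parts vanishing at $t=0,\infty$ by the decay of the Gaussian and of $t^{\gamma-1}$. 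This is the computation carried out in \cite{StingaTorrea}, and I would reproduce it fiberwise in $\lambda$ to avoid domain technicalities: for fixed $\lambda$ the scalar function $w(y)=\frac{1}{\Gamma(\gamma)}\int_0^\infty e^{-t\lambda}\lambda^\gamma e^{-y^2/4t}\,t^{\gamma-1}\,dt$ must satisfy $w''+\frac{a}{y}w'-\lambda w=0$, which is precisely the Bessel-type equation \eqref{Bessel} after rescaling, matching Lemma \ref{lemma-Bessel}.

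Finally, for the Neumann limit I would compute $\lim_{y\to 0^+} y^a\partial_y u$ directly from the Poisson form, differentiating under the integral sign; the two equalities in the displayed limit (the quotient $\tfrac{u(x,y)-u(x,0)}{y^{2\gamma}}$ and $\tfrac{1}{2\gamma}y^a\partial_y u$ being asymptotically equal) follow because $a=1-2\gamma$ implies $y^a\partial_y u \sim 2\gamma\, y^{2\gamma-1}\cdot\tfrac{u(x,y)-u(x,0)}{y^{2\gamma}}$ to leading order. Extracting the constant $\frac{\Gamma(-\gamma)}{2^{2\gamma}\Gamma(\gamma)}$ amounts to evaluating $\lim_{y\to 0^+}\partial_y\!\left(y^{-2\gamma}(P_y^\gamma f - f)\right)$ against the small-$y$ asymptotics of $K_\gamma$ from Lemma \ref{lemma-Bessel}, reproducing the constant \eqref{constant1} up to the factor relating $d_\gamma$ to $\Gamma(-\gamma)/2^{2\gamma}\Gamma(\gamma)$. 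The main obstacle, I expect, is not any single computation but justifying the interchange of the $y$-differentiation (and the $y\to 0$ limit) with the $t$-integral and with the spectral integral defining $e^{-tL}$; this requires that $f\in \mathrm{Dom}(L^\gamma)$ so that $L^\gamma f\in L^2(M)$ and the relevant integrals converge in $L^2(M)$, and the cleanest way around it is to prove every identity first in the spectral variable $\lambda$ and then integrate against the resolution of the identity $E(\lambda)$, exactly as the functional-calculus setup preceding the theorem is designed to permit.
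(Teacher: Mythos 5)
Your proposal is correct, and there is nothing to compare it against inside the paper itself: Theorem \ref{thm-ST1} is not proved there but quoted verbatim from \cite{StingaTorrea} (``They show that\dots''), so the only internal benchmark is the paper's parallel computation for $\mathbb H^n$ in Section 2. Your route coincides with both that computation and the cited source: all three displayed formulas collapse, fiberwise in the spectral variable $\lambda$ of $L$, to the single scalar identity
\begin{equation*}
w(y)\;=\;\frac{2}{\Gamma(\gamma)}\Bigl(\tfrac{y\sqrt{\lambda}}{2}\Bigr)^{\gamma}K_\gamma\bigl(y\sqrt{\lambda}\bigr)\;=\;\varphi_\gamma\bigl(y\sqrt{\lambda}\bigr),
\end{equation*}
where $\varphi_\gamma(s)=2^{1-\gamma}\Gamma(\gamma)^{-1}s^\gamma K_\gamma(s)$ is exactly the solution of \eqref{Bessel} singled out by Lemma \ref{lemma-Bessel} in the paper's hyperbolic argument; your subordination substitution $r=y^2/(4t)$, your fiberwise ODE check (which is the right way to make rigorous the slightly hand-waved integration-by-parts sketch in your second paragraph), and your extraction of the boundary constant from the small-argument expansion of $K_\gamma$ are the same computations that produce \eqref{vFourier} and \eqref{constant1}. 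The two normalizations agree because $\Gamma(-\gamma)=-\Gamma(1-\gamma)/\gamma$ gives $2\gamma\,\Gamma(-\gamma)/\bigl(2^{2\gamma}\Gamma(\gamma)\bigr)=-d_\gamma^{-1}$ with $d_\gamma$ as in \eqref{constant-d}, which is precisely the consistency between this theorem and \eqref{Neumann}.
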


These identities are to be understood in the $L^2$ sense. If addition, we make the following extra assumptions:
\renewcommand{\theenumi}{\Roman{enumi}}
\renewcommand{\labelenumi}{\theenumi}
\begin{enumerate}
\item \label{hypo1}The heat diffusion semigroup is given by integration against a nonnegative heat kernel $p_t(x,\zeta)$.
\item  \label{hypo2} The heat kernel belongs to the domain of $L$ and $\partial_t  p_t=L p_t$, where the $t$-derivative is understood in the classical sense.
\item \label{hypo3} \label{hyp2}Given $x$, there exists a constant $C_x$ and $\epsilon>0$ such that
\begin{equation}\label{assumption-ST}
\|p_t(x,\cdot)\|_{L^2(M)}+\|\partial_t p_t (x,\cdot)\|_{L^2(M)}\leq C_x (1+t^\epsilon)t^{-\epsilon}.
\end{equation}
\end{enumerate}
Then in the same paper \cite{StingaTorrea} the authors give a formula for the Poisson kernel for \eqref{extension-ST}:

\renewcommand{\theenumi}

\begin{teo}\label{thm-ST2}
Under the additional hypotheses \eqref{hypo1}, \eqref{hypo2} and \eqref{hypo3} we have:
\begin{itemize}
\item[1.] One may write $ P^\gamma_y f(x)=\int_{M} \mathcal P^\gamma_y(x,\zeta)f(\zeta)dv(\zeta)$, where the Poisson kernel is given by
    $$\mathcal P_y^\gamma(x,\zeta):=\frac{y^{2\gamma}}{2^{2\gamma}\Gamma(\gamma)}\int_0^\infty p_t(x,\zeta)e^{-\frac{y^2}{4t}}\frac{dt}{t^{1+\gamma}},$$
    and, for each $\zeta\in M$, is a $L^2$-function that verifies the first equation in \eqref{assumption-ST}.
\item[2.] ${\sup_{y\geq 0}} | P_y^\gamma f|\leq \sup_{t\geq 0}|e^{-tL} f| $ in $M$.
\item[3.] $\|P_y^\gamma f\|_{L^p(M)}\leq \|f\|_{L^p(M)}$, for all $y\geq 0$.
\item[4.] If $\lim_{t \to 0^+} e^{-tL} f=f$ in $L^p(M)$, then $\lim_{y\to 0^+}  P_y^\gamma f=f$ in $L^p(M)$.
\end{itemize}
\end{teo}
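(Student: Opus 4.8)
The plan is to take the explicit subordination formula for $P_y^\gamma f$ from Theorem \ref{thm-ST1} as the starting point and use the three extra hypotheses to convert all the abstract $L^2$ identities into honest pointwise and $L^p$ statements. I would begin with part 1, the kernel representation. Starting from
$$P_y^\gamma f(x)=\frac{y^{2\gamma}}{2^{2\gamma}\Gamma(\gamma)}\int_0^\infty e^{-tL}f(x)\,e^{-\frac{y^2}{4t}}\frac{dt}{t^{1+\gamma}},$$
hypothesis \eqref{hypo1} lets me write $e^{-tL}f(x)=\int_M p_t(x,\zeta)f(\zeta)\,dv(\zeta)$. The only real work here is to justify interchanging the $t$-integral with the $\zeta$-integral, so that
$$P_y^\gamma f(x)=\int_M\left(\frac{y^{2\gamma}}{2^{2\gamma}\Gamma(\gamma)}\int_0^\infty p_t(x,\zeta)\,e^{-\frac{y^2}{4t}}\frac{dt}{t^{1+\gamma}}\right)f(\zeta)\,dv(\zeta).$$
This is a Fubini/Tonelli argument; the nonnegativity of $p_t$ from \eqref{hypo1} makes Tonelli directly applicable for $f\geq 0$, and the general case follows by splitting $f$ into positive and negative parts (or real/imaginary parts). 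The claim that $\mathcal P_y^\gamma(x,\cdot)\in L^2(M)$ satisfying the first bound in \eqref{assumption-ST} is then a consequence of Minkowski's integral inequality applied to the defining integral, together with the growth estimate \eqref{assumption-ST} on $\|p_t(x,\cdot)\|_{L^2(M)}$ to control the $t$-integral near $t=0$ and $t=\infty$.

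For parts 2, 3 and 4 the key structural observation is the second representation in Theorem \ref{thm-ST1},
$$P_y^\gamma f(x)=\frac{1}{\Gamma(\gamma)}\int_0^\infty e^{-\frac{y^2}{4r}L}f(x)\,e^{-r}\frac{dr}{r^{1-\gamma}},$$
which exhibits $P_y^\gamma f$ as an average of the heat semigroup against the probability-type weight $\frac{1}{\Gamma(\gamma)}e^{-r}r^{\gamma-1}\,dr$, whose total mass is $1$. Part 2 follows immediately: pointwise, $|P_y^\gamma f(x)|\leq\frac{1}{\Gamma(\gamma)}\int_0^\infty|e^{-\frac{y^2}{4r}L}f(x)|\,e^{-r}r^{\gamma-1}\,dr\leq\sup_{t\geq 0}|e^{-tL}f(x)|$, since the weight integrates to one and each term $e^{-\frac{y^2}{4r}L}f$ is a heat evolution at time $t=\frac{y^2}{4r}\geq 0$. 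Part 3 is the same estimate carried out in $L^p$: by Minkowski's integral inequality,
$$\|P_y^\gamma f\|_{L^p(M)}\leq\frac{1}{\Gamma(\gamma)}\int_0^\infty\|e^{-\frac{y^2}{4r}L}f\|_{L^p(M)}\,e^{-r}r^{\gamma-1}\,dr\leq\|f\|_{L^p(M)},$$
where the last step uses the $L^p$-contractivity of the heat semigroup (which follows from nonnegativity of $p_t$ together with the fact that $e^{-tL}$ preserves mass and is $L^\infty$-contractive, hence $L^p$-contractive by interpolation).

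Part 4 is a standard approximation-of-the-identity argument built on the same averaging formula. Given $\varepsilon>0$, the assumption $\lim_{t\to 0^+}e^{-tL}f=f$ in $L^p$ provides a $\delta>0$ with $\|e^{-tL}f-f\|_{L^p}<\varepsilon$ for $0\leq t<\delta$. I would then split the $r$-integral in the expression $P_y^\gamma f-f=\frac{1}{\Gamma(\gamma)}\int_0^\infty(e^{-\frac{y^2}{4r}L}f-f)\,e^{-r}r^{\gamma-1}\,dr$ at the value $r_0=\frac{y^2}{4\delta}$: for $r>r_0$ the evolution time $\frac{y^2}{4r}$ is smaller than $\delta$, so the integrand is controlled by $\varepsilon$ times an integrable weight; for $r<r_0$, I bound the integrand crudely by $2\|f\|_{L^p}$ (using part 3) and note that $r_0\to 0$ as $y\to 0$, so $\int_0^{r_0}e^{-r}r^{\gamma-1}\,dr\to 0$. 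Letting $y\to 0$ and then $\varepsilon\to 0$ gives the convergence in $L^p$. The main obstacle in the whole argument is the rigorous justification of the Fubini interchange in part 1, which is where the hypotheses \eqref{hypo1}--\eqref{hypo3} — nonnegativity, the heat-equation regularity, and the quantitative $L^2$ bound \eqref{assumption-ST} — are genuinely needed; once the kernel representation and the averaging formula are in place, parts 2--4 are soft consequences of the probabilistic structure of the subordination weight.
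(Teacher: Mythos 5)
Your parts 2, 3 and 4 are correct, and so are the Fubini/Tonelli interchange and the Minkowski bound in part 1; this is the same subordination argument as in \cite{StingaTorrea}, which is where the paper itself sends the reader for the proof (the theorem is stated here as a citation, with no proof given). So most of your write-up follows the intended route.

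There is, however, one genuine gap, and your own text points to it: hypothesis \eqref{hypo2} is never used anywhere in your argument, even though the theorem lists it as needed. The clause in part 1 that the kernel ``verifies the first equation in \eqref{assumption-ST}'' is a garbled cross-reference --- \eqref{assumption-ST} is a single inequality and has no ``first equation''; the intended reference (as in the original theorem of Stinga--Torrea) is the first equation of the extension system \eqref{extension-ST}, i.e.\ the claim that for each fixed $\zeta$ the function $(x,y)\mapsto\mathcal P_y^\gamma(x,\zeta)$ is itself a solution of
$$\partial_{yy}\mathcal P_y^\gamma(x,\zeta)+\frac{a}{y}\,\partial_y\mathcal P_y^\gamma(x,\zeta)-L_x\mathcal P_y^\gamma(x,\zeta)=0 .$$
Your Minkowski estimate establishes square-integrability and a quantitative bound of the same shape as \eqref{assumption-ST}, but not this PDE, which is the substantive content of that clause (it is what later makes Harnack-type and elliptic arguments possible). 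The missing computation is exactly where \eqref{hypo2} and \eqref{hypo3} act in tandem: differentiate under the integral sign in the definition of $\mathcal P_y^\gamma$ (justified in $L^2$ by the bounds on $\|p_t(x,\cdot)\|_{L^2}$ and $\|\partial_t p_t(x,\cdot)\|_{L^2}$ in \eqref{assumption-ST}), replace $L_x p_t$ by $-\partial_t p_t$ using \eqref{hypo2}, and then integrate by parts in $t$; the boundary terms vanish for $y>0$ because $e^{-y^2/(4t)}t^{-1-\gamma}\to 0$ both as $t\to 0^+$ and as $t\to\infty$, and the elementary identity
$$\Bigl(\partial_{yy}+\frac{a}{y}\,\partial_y\Bigr)\Bigl[y^{2\gamma}e^{-\frac{y^2}{4t}}t^{-1-\gamma}\Bigr]=\partial_t\Bigl[y^{2\gamma}e^{-\frac{y^2}{4t}}t^{-1-\gamma}\Bigr],$$
valid since $a=1-2\gamma$, then delivers the equation. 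Adding this computation closes the gap; the rest of your proof stands as written.
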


\subsection{Heat kernels on noncompact manifolds} \label{heatestimates}

Good references for heat kernel on manifolds are the book \cite{Davies:book} or the survey \cite{Grigoryan1998:survey}. Let $(M^n,g)$ be a Riemannian manifold with a metric $g$. Let $L=-\Delta_{M}$, the Laplace-Beltrami operator with respect to this metric. If $M$ is complete, then $L$ is self-adjoint on $\mathcal C_0^\infty(M)$.
 We have that $e^{-Lt}$ is a positivity-preserving one-parameter contraction semigroup on $L^p(M)$ for $1\leq p\leq \infty$. In particular, in $L^2(M)$ it has a strictly positive $\mathcal C^\infty$ kernel $p_t(x,y)$, $x,x'\in M$, $t>0$, satisfying:
\begin{enumerate}
\item[\emph{a.}] As a function of $t$ and $x$,
$$\partial_t p_t=\Delta_M p_t.$$
\item[\emph{b.}] When $t\to 0^+$,
$$p_t(\cdot,x')\to \delta_{x'}.$$
\item[\emph{c.}] The semigroup property $e^{(t+s)\Delta_M}=e^{t\Delta_M} e^{s\Delta_M}$, which reads as
\begin{equation}\label{semigroup}
p_{t+s}(x,x')=\int_M p_t(x,z) p_s (x,x')\,dz.
\end{equation}
\item[\emph{d.}] The symmetry
$$p_t(x,x')=p_t(x',x).$$
\end{enumerate}

It is clear then that the hypothesis of Theorem \ref{thm-ST1} are satisfied in this case. However, to pass from the heat kernel to the Poisson kernel as in Theorem \ref{thm-ST2} is a non-trivial issue in the case of non-compact manifolds since one needs to control the behavior at infinity.

We concentrate here into obtaining the bound \eqref{hypo3}. First, in order to get $L^2$ estimates for the kernel $p_t$ it is enough to have $L^\infty$ bounds. Indeed, for fixed $x\in M$,
$$\|p_t(x,\cdot)\|^2_{L^2(M)}\leq \sup_{x'\in M} \{p_t(x,x')\}\int_M p_t(x,x')\,dx'\leq \|p_t(x,\cdot)\|_{L^\infty(M)},$$
where we have used that, for all $t>0$ and $x\in M$, $\int_M p_t(x,x')dx'\leq 1,$
where $dx'$ is the volume element in the manifold $M$. For stochastically complete manifolds, this integral is exactly equal to one (see \cite{Grigoryan1999}).

Now, to estimate the time derivative of $p_t$, we recall that (see \cite{Grigoryan1993})
$$\int_{M}|\nabla^k p_t(x,x')|^2 \,dx'\leq \frac{1}{k} \int_{M}|p_{t/2^k}(x,x')|^2,\quad\mbox{for all }k\in\mathbb N.$$
We conclude that, in order to check hypothesis \eqref{hypo3} in Theorem \ref{thm-ST2}, it is enough to find good pointwise upper bounds for the heat kernel $p_t(x,x')$. This can be also seen by the holomorphy of the heat semigroup (see for instance \cite{Varopoulos}).

On the other hand, because of the semigroup property \eqref{semigroup} and the symmetry of the heat kernel,
\begin{equation}\label{formula50}
p_t(x,x)=\int_M p_{t/2}(x,z)^2\,dz.\end{equation}
Using the semigroup identity \eqref{semigroup} again and the Cauchy-Schwarz inequality,
$$p_t(x,x')=\int_M p_{t/2}(x,z) p_{t/2}(x',z)\,dz\leq
\lp \int_M p_{t/2}(x,z)^2\,dz\rp^{1/2}\lp \int_M p_{t/2}(x',z)^2\,dz\rp^{1/2},$$
which together with \eqref{formula50} it implies
$$p_t(x,x')\leq \sqrt{p_t(x,x) p_t(x',x')};$$
 i.e., if one knew good on-diagonal estimates for $p_t(x,x)$, this would imply a $L^\infty$ bound for the heat kernel $p_t(x,x')$. Also, we get $p_{2t}(x,x)\leq p_t(x,x)$ so if \eqref{hypo3} is satisfied up to some time $t_0$, then we get for all $t\geq t_0$ the upper bound $C_x(1+(t_0/2)^\epsilon)(t_0/2)^{-\epsilon}$. Therefore it is enough to verify \eqref{hypo3} for small times.

 Note that the heat kernel on Euclidean space has the explicit formula
 $$p_t(x,x')=\frac{1}{(4\pi t)^{n/2}}e^{-\frac{\rho^2}{4 t}},$$
 where $\rho=|x-x'|$.  The survey \cite{Grigoryan1998:survey} gives many examples for bounds of the heat kernel, in relation to Faber-Krahn and isoperimetric inequalities. For instance,
\begin{itemize}
\item Minimal submanifolds of $\mathbb R^N$.
\item Manifolds with sectional curvature bounded in between two constants (\cite{Cheng-Li-Yau}).
\item Cartan-Hadamard manifolds; which are geodesically complete simply connected non-compact Riemannian manifolds with non-positive sectional curvature (\cite{Cheeger-Yau,Debiard-Gaveau-Mazet}).
\end{itemize}
In all of these the heat kernel has the same asymptotic behavior as in the Euclidean case:
$$0\leq p_t(x,x')\leq \frac{C}{t^{n/2}}e^{-\frac{\rho^2}{ct}},$$
for some $c,C>0$, and where $\rho=d_M(x,x')$.

%More generally, we say that a manifold $M$ has \emph{bounded geometry} if its Ricci curvature is uniformly bounded below and if its injectivity radius is bounded away from 0. In this case, one has the on-diagonal estimate (\cite{Cheeger-Gromov-Taylor})
%$$0\leq p_t(x,x)\leq C\max\{t^{-n/2},t^{-1/2}\}.$$

%On the other hand, the classical Li-Yau estimates only assume that we have a lower bound for the Ricci curvature.
%In particular, from \cite{Cheeger-Gromov-Taylor, Li-Yau} we know that if $M$ is a complete manifold with non-negative Ricci tensor, then fixed $\delta>0$,
%\begin{equation} \label{upper-heat1} 0\leq p_t(x,x')\leq c_\delta  |B(x,\sqrt t)|^{-1/2}|B(x',\sqrt t)|^{-1/2} e^{-\frac{\rho^2}{(4+\delta)t}}, \end{equation}
%where $\rho=d_M(x,x')$, for all $t>0$, $x,x'\in M$ and $B(x,\sqrt t)$ is a ball centered at $x$ of radius $\sqrt t$. In addition, \cite{Davies:upper-bounds} improved this estimate to allow manifolds satisfying only
%$$Ric\geq -(n-1)\beta^2$$
%for some $\beta>0$. In this case, the bottom of the spectrum of the operator $-\Delta_M$, denoted as $\lambda_1\geq 0$, plays a crucial role in the heat kernel estimate; more precisely,

If $M$ is a complete manifold with non-negative Ricci tensor, heat kernel bounds were obtained by \cite{Li-Yau}. More generally, if $M$ is a complete manifold with lower bounded Ricci tensor,
 $$Ric\geq -(n-1)\beta^2$$
for some $\beta>0$, then, denoting $\lambda_1\geq 0$ the bottom of the spectrum of the operator $-\Delta_M$, we have the following heat kernel estimate \cite{Davies:upper-bounds},
\begin{equation}
\label{upper-heat2}
0\leq p_t(x,x')\leq c_\delta  |B(x,\sqrt t)|^{-1/2}|B(x',\sqrt t)|^{-1/2}e^{(\delta-\lambda_1)t}
e^{-\frac{\rho^2}{(4+\delta)t}}.
\end{equation}
%Next, in order to relate $|B(x',\sqrt t)|$ to $|B(x,\sqrt t)|$ we may may redo the proof of Corollary 3.1 in \cite{Li-Yau}, for any $\beta\neq 0$. First
We recall Bishop's comparison theorem, which states that if $0<R_1<R_2$, then
 \begin{equation}\label{comparison}
 \frac{|B(x,R_2)|}{|B(x,R_1)|}\leq \frac{|B_\beta(R_2)|}{|B_\beta(R_1)|},
 \end{equation}
where $B_\beta(R)$ is the volume of the geodesic ball of radius $R$ in the constant $-\beta^2$ sectional curvature space form, that may be calculated from the volume element
\begin{equation*}
J_\beta(\rho)=\left\{
\begin{split}
&\lp\tfrac{1}{\beta}\rp^{n-1}\sinh^{n-1} (\beta \rho),\quad \mbox{if}\quad \beta\neq 0,\\
&\rho^{n-1} ,\quad \mbox{if}\quad \beta= 0.
\end{split}\right.
\end{equation*}
In particular, for all $x\in M$,
$$\frac 1{|B(x,\sqrt{t})|}\leq \frac 1{|B(x,1)|}\frac{C}{t^n}.$$
%
%First, if $\sqrt t\geq 2\rho$, then
%\begin{equation*}|B(x,\sqrt t)|\leq |B(x,\sqrt t-\rho)|\frac{|B_\beta(\sqrt t)|}{\,|B_\beta(\sqrt t-\rho)|\,}\leq C_n |B(x',\sqrt t)|.\end{equation*}
%On the other hand, if $\sqrt t<2\rho$, then \begin{equation*}|B(x,\sqrt t)|\leq |B(x,\sqrt t /4)| \frac{|B_\beta(\sqrt t)|}{|\,B_\beta(\sqrt t/4)\,|}\leq C_n |B(x',\rho+\sqrt t/4)|,\end{equation*}
%and if we use comparison again,
%\begin{equation*}|B(x',\rho+\sqrt t/4)|\leq |B(x',\sqrt t /4)|\frac{|B_\beta(\rho+\sqrt t/4)|}{|B_\beta (\sqrt t/4)|}\leq C_n |B (x',\sqrt t)| \lp 1+\frac{\rho}{\sqrt t}\rp^n\end{equation*}
%We have just shown that, for any $x,x'\in M$, $t>0$, two balls may be compared as:
%\begin{equation}\label{balls}|B(x,\sqrt t)|\leq C_n |B(x',\sqrt t)| \lp 1+\frac{\rho}{\sqrt t}\rp^n.\end{equation}
%If we insert these estimates for $B(x',\sqrt t)$ into \eqref{upper-heat1}, for instance, and using the inequality $ze^{-z^2}\leq C e^{-z^2/2}$, then, after readjusting the constants,
%\begin{equation}\label{condballs1} 0\leq p_t(x,x')\leq c_\delta  |B(x,\sqrt t)|^{-1}e^{-\frac{\rho^2}{(4+\delta)t}}, \end{equation}
%while substitution into \eqref{upper-heat2} gives
%\begin{equation}\label{condballs2} 0\leq p_t(x,x')\leq c_\delta  |B(x,\sqrt t)|^{-1}e^{(\delta-\lambda_1)t}e^{-\frac{\rho^2}{(4+\delta)t}}. \end{equation}
It is clear that to prove the bound \eqref{hypo3} in the situations just discussed, one should ask for a lower bound for the volume of balls. More precisely, we seek $C>0$ such that
\begin{equation}\label{condition-balls}
%|B(x,\sqrt{t})|\geq t^{\epsilon}, \quad \text{when }t\to 0.
|B(x,1)|\geq C>0 \quad \forall x\in M.
\end{equation}

Finally, for a manifold with doubling condition and local Poincar\'e inequality, we have as an upper-bound for the heat kernel \cite{Saloff-Coste:book}
$$p_t(x,x)\leq \frac{C}{|B(x,\sqrt{t})|}.$$
In view of the doubling condition, it is enough again to have \eqref{condition-balls} to get \eqref{hypo3}.

Summarizing the results of this section:

\begin{prop}\label{prop:summary}
Let $M$ be a complete noncompact Riemannian manifold. Hypothesis \eqref{hypo3} in Theorem \ref{thm-ST2} is fulfilled if $M$ satisfies any of these:
\begin{itemize}
%\item Sectional curvature is uniformly bounded between two constants.
\item $M$ is a Cartan-Hadamard manifold.
%\item $M$ has bounded geometry.
%\item $Ric\geq 0$, and \eqref{condition-balls} holds.
\item $Ric\geq -(n-1)\beta^2$ for some $\beta>0$
%,$\lambda_1>0$
and \eqref{condition-balls} holds.
\item $M$ satisfies a volume doubling condition and the local Poincar\'e inequality.
\end{itemize}
\end{prop}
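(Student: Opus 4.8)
The plan is to verify each of the three sufficient conditions separately, in each case reducing to the single quantitative requirement \eqref{hypo3}, namely the bound
\[
\|p_t(x,\cdot)\|_{L^2(M)}+\|\partial_t p_t(x,\cdot)\|_{L^2(M)}\leq C_x(1+t^\epsilon)t^{-\epsilon}.
\]
The overarching strategy, already laid out in the discussion preceding the statement, is that the $L^2$-norm of the heat kernel is controlled by its $L^\infty$-norm via
\[
\|p_t(x,\cdot)\|^2_{L^2(M)}\leq \|p_t(x,\cdot)\|_{L^\infty(M)}\int_M p_t(x,x')\,dx'\leq \|p_t(x,\cdot)\|_{L^\infty(M)},
\]
that the time-derivative term is controlled through the gradient estimate $\int_M|\nabla^k p_t|^2\leq \tfrac1k\int_M p_{t/2^k}^2$, and that by the semigroup plus Cauchy--Schwarz argument giving $p_t(x,x')\leq\sqrt{p_t(x,x)p_t(x',x')}$, together with the remark that $p_{2t}(x,x)\leq p_t(x,x)$, it suffices to produce a good on-diagonal pointwise upper bound for small times $t\leq t_0$. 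So in every case the task collapses to controlling $p_t(x,x)$ for small $t$.

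First I would treat the Cartan--Hadamard case. Here the cited results (\cite{Cheeger-Yau,Debiard-Gaveau-Mazet}) give the Euclidean-type bound $0\leq p_t(x,x')\leq C t^{-n/2}e^{-\rho^2/(ct)}$, so on the diagonal $p_t(x,x)\leq Ct^{-n/2}$; this is exactly of the required form with $\epsilon=n/2$, and the derivative bound follows from the gradient estimate applied to $p_{t/2^k}$. Second, for the case $\mathrm{Ric}\geq -(n-1)\beta^2$ together with the uniform lower volume bound \eqref{condition-balls}, I would invoke Davies' estimate \eqref{upper-heat2},
\[
0\leq p_t(x,x')\leq c_\delta|B(x,\sqrt t)|^{-1/2}|B(x',\sqrt t)|^{-1/2}e^{(\delta-\lambda_1)t}e^{-\rho^2/((4+\delta)t)}.
\]
On the diagonal this reads $p_t(x,x)\leq c_\delta|B(x,\sqrt t)|^{-1}e^{(\delta-\lambda_1)t}$, and Bishop's comparison \eqref{comparison} gives $|B(x,\sqrt t)|^{-1}\leq |B(x,1)|^{-1}Ct^{-n}$ for small $t$; combined with \eqref{condition-balls} one gets $|B(x,1)|^{-1}\leq C$ uniformly, so $p_t(x,x)\leq Ct^{-n}$ for small $t$, again of the required form. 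Third, for the volume-doubling plus local Poincar\'e case, the Saloff-Coste bound $p_t(x,x)\leq C|B(x,\sqrt t)|^{-1}$ together with the doubling inequality and \eqref{condition-balls} yields the same type of small-time decay.

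The step I expect to be the genuine obstacle is uniformity in $x$: hypothesis \eqref{hypo3} allows the constant $C_x$ to depend on $x$, but one must still ensure that for each fixed $x$ the bound holds for all small $t$ with a power of $t$ independent of $t$. In the Cartan--Hadamard and Davies cases the $x$-dependence is harmless because the $x$-dependent quantity is either absent or enters only through $|B(x,1)|^{-1}$, which is globally bounded precisely by \eqref{condition-balls}; this is exactly why that condition is imposed in the last two bullets and not in the first. The remaining care is in passing from the on-diagonal bound to the full $L^2$ and derivative bounds: one must check that the gradient estimate $\int_M|\nabla^k p_t|^2\leq\tfrac1k\int_M p_{t/2^k}^2$ produces a bound of the same polynomial-in-$t$ form (it does, since replacing $t$ by $t/2^k$ only changes constants), and that the monotonicity $p_{2t}(x,x)\leq p_t(x,x)$ upgrades the small-time estimate to all times, which is the content of the remark that it suffices to verify \eqref{hypo3} for small $t$. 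Once these observations are assembled, \eqref{hypo3} holds in each of the three cases, completing the proof.
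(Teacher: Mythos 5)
Your proposal is correct and follows essentially the same route as the paper: the paper's ``proof'' of this proposition is precisely the discussion preceding it, which reduces \eqref{hypo3} to on-diagonal bounds for small times (via $L^2$-from-$L^\infty$, the semigroup/Cauchy--Schwarz inequality $p_t(x,x')\leq\sqrt{p_t(x,x)p_t(x',x')}$, monotonicity of $p_t(x,x)$, and Grigor'yan's derivative estimate), and then invokes the Euclidean-type bound for Cartan--Hadamard manifolds, Davies' estimate plus Bishop comparison plus \eqref{condition-balls} for the Ricci case, and the Saloff-Coste bound with doubling for the third case, exactly as you do. Your observation that the volume lower bound \eqref{condition-balls} is what makes the dependence on the integration variable uniform also matches the paper's reasoning (indeed, the paper's own discussion uses \eqref{condition-balls} in the third case as well, even though the bullet omits it).
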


In the following, we look at several classes of admissible manifolds. The first two examples are classical and more or less explicit formulas for the heat kernel are known. Then we concentrate on spherically symmetric manifolds, that serve as a model for more general cases such as manifolds with ends.

\subsection{Admissible classes of manifolds}\label{admissible manifolds}

\subsubsection{Symmetric spaces}\label{symmetricspaces}
An explicit expression can be found for the heat kernel on hyperbolic space:
\begin{equation}p_t(\rho)= \frac{(2\pi)^{-\frac{d}{2}}}{2}\,t^{-\frac{1}{2}}e^{-\lp\frac{n-1}{2}\rp^2t}
\left(-\frac{\partial_\rho}{\sinh\rho}\right)^\frac{n-1}{2}e^{-\frac{\rho^2}{4t}}\label{Ptodd}\end{equation}
for $n\geq 3$ odd, and for $n\geq 2$ even,
\begin{equation}\label{Pteven}
p_t(\rho)= (2\pi)^{-\frac{d+1}{2}}\,t^{-\frac{1}{2}}e^{-\lp\frac{n-1}{2}\rp^2t}
\int_\rho^\infty\frac{\sinh s}{\sqrt{\cosh s-\cosh \rho}}
\left(\frac{\partial_s}{\sinh s}\right)^\frac{n}{2} e^{-\frac{s^2}{4t}}\, ds,
\end{equation}
%For hyperbolic space the heat kernel is explicitly given by \begin{equation} p_t(\rho)=\lp\frac{-1}{2\pi}\rp^\frac{n-1}{2}\frac{1}{(4\pi t)^{\frac{1}{2}}}\left(\frac{\partial_\rho}{\sinh\rho}\right)^\frac{n-1}{2}e^{-\frac{(n-1)^2}{4}t-\frac{\rho^2}{4t}}\label{podd}\end{equation}for $n\geq 3$ odd, and for $n\geq 2$ even,\begin{equation}\label{peven}p_t(\rho)= \frac{(-1)^{\frac{n-2}{2}}}{2(2\pi)^{\frac{n+1}{2}}}t^{-\frac{3}{2}}e^{-\frac{(n-1)^2}{4}t}\left(\frac{\partial_r}{\sinh r}\right)^\frac{n-2}{2}\int_\rho^\infty\frac{re^{-\frac{r^2}{4t}}}{\sqrt{\cosh r-\cosh\rho}}\, dr,\end{equation}
where $\rho=d_{\mathbb H^n}(x,x')$. Moreover, global bounds were derived in \cite{Davies-Mandouvalos:kleinian}
\begin{equation}\label{approxhyperheat}
p_t(\rho)\approx \frac{(1+\rho)(1+\rho+t)^{\frac{n-3}{2}}}{t^\frac n2}e^{-\frac{(n-1)^2}{4}t-\frac{n-1}{2}\rho-\frac{\rho^2}{4t}},
\end{equation}
for all $\rho\geq 0$, $t>0$, in the sense that the ratio of the right hand side and the left hand side is uniformly bounded between two positive constants. These estimates have a generalisation to all symmetric spaces of rank one, and more generally to Damek-Ricci spaces, since in this case there is also an explicit formula for the heat kernel.

%,$$p_t(\rho)=\frac{2^{m_2}e^{-(\frac{m_1}{2}+m_2)^2t}}{(2\pi)^{\frac{m_1}{2}+\frac{m_2}{2}+1}t^\frac 12}\int_\rho^\infty \frac{d(\cosh 2s)}{\sqrt{\cosh 2s-\cosh 2r}}\left(-\frac{\partial}{\partial(\cosh 2s)}\right)^{\frac{m_2+1}{2}}\left(-\frac{\partial}{\partial(\cosh s)}\right)^{\frac{m_1}{2}}e^{-\frac{s^2}{4t}}ds.$$
Global bounds on the heat kernel in the higher rank case were proved in \cite{Anker-Ostellari}. We recall that Ricci curvature is  bounded from below since is an Einstein manifold, that is  $Ric=-(\frac {m_2}4 +m_1)g$ (where as before $g$ is the Riemannian metric on $M$).
%, and that the bottom of the spectrum is $\lambda_1=\frac{(\frac {m_2}2+m_1)^2}{4}$.
Here $m_1$ and $m_2$ stand for the two dimensions entering the algebraic construction of the space. For instance $m_1=0,m_2=n-1$ for the real hyperbolic space and  $m_1=1,m_2=2(n-1)$ for the complex hyperbolic space. Also, the volume element is of type $\sinh r^{m_1+m_2}\cosh r^{m_1}$. Therefore Theorem \ref{thm-ST2} applies by using directly the explicit sharp $L^\infty$ bounds on the heat kernel, or by using \eqref{condition-balls} and explicit formulas for the volume of the balls.
 %the fact that we have $|B(x,\sqrt{t})|=|B(0,d(0,x))|=2^{m+k}\sinh ^{m+k}\frac {\sqrt{t}}2 \cosh ^k \frac {\sqrt{t}}2$ where $m$ and $k$ stands for the two dimensions entering the algebraic construction of the space. For instance $k=0,m=n-1$ for the real hyperbolic space and  $k=1,m=2(n-1)$ for the complex hyperbolic space..
Moreover, we can represent the fractional Laplacian in the spirit of Theorem \ref{thm:singular-integral}.
\begin{rem}\label{symmetric_kernel}
From \cite{Anker-Ostellari} we have that
$$\mathcal P_y^\gamma(x,\zeta)\approx \int_0^\infty   \frac{\prod_{\alpha\in \Sigma_0^+}(1+\langle \alpha, H\rangle)(1+\langle \alpha, H\rangle+t)^{\frac{m_\alpha+m_{2\alpha}}{2}-1}}{t^{\frac n2+1+\gamma}}e^{-|\varrho|^2t-\langle \varrho, H\rangle-\frac{|H|^2}{4t}}dt.$$
The quantities $\varrho$, $H$, $\alpha$ and $\Sigma_0$ are related to the Cartan decomposition as follows:
If the associated Lie algebra $\mathfrak{g}$ decomposes as
$$\mathfrak{g}=\mathfrak{a} \oplus \mathfrak{m}\oplus \{\oplus_{\alpha\in\Sigma} \mathfrak{g}_\alpha\},$$ and
$\Sigma^+, \; \Sigma_0^+$ denote the set of positive and positive indivisible roots, then
\begin{equation*}\begin{split}
\varrho=&\frac{1}{2}\sum_{\alpha_\in \Sigma^+} m_\alpha \alpha, \hbox{ where }m_\alpha=\textrm{dim } \mathfrak{g}_\alpha,\\
G=& K(\textrm{exp } \overline{\mathfrak{a}^+})K, \\
\zeta^{-1}x&=k_1e^H k_2, \hbox{ with }k_1, k_2\in K, \; H\in \overline{\mathfrak{a}^+}.
\end{split}
\end{equation*}
For precise definitions of these quantities we refer the reader to \cite{Anker-Ostellari} and \cite{Helgason:libro2}.
\end{rem}

In a similar fashion as the proof of Theorem \ref{thm:singular-integral}, one can find an explicit formula for the convolution kernel of the fractional Laplacian. From Theorem \ref{thm-ST2} we have that obtaining this kernel is equivalent to compute the Laplace transform of the function \\
$ \ \prod_{\alpha\in \Sigma_0^+}(1+\langle \alpha, H\rangle)(1+\langle \alpha, H\rangle+t)^{\frac{m_\alpha+m_{2\alpha}}{2}-1}
\frac{e^{-\langle \varrho, H\rangle-\frac{|H|^2}{4t}}}{t^{\frac n2+1+\gamma}}$ at $|\varrho|^2$ .

The Laplace transform of $g(t)=\frac{e^{-\langle \varrho, H\rangle-\frac{|H|^2}{4t}}}{t^{\frac n2+1+\gamma}}$ is given by the integral
$$\calL(g)(s)=\int_0^\infty g(t)e^{-st}dt.$$
Since $$t^2g'(t)=\lp \frac{|H|^2}{4}- (\frac n2+1+\gamma)t\rp g(t),$$
and $g(0)=0$, applying Laplace transform we have
$$\partial^2_s(s \calL(g))=\lp \frac{|H|^2}{4}+ (\frac n2+1+\gamma)\partial_s\rp \calL(g),$$
which is a Bessel type equation of solution  $$\calL(g)=C_{n,\gamma}s^{\frac{n+2\gamma}{4}} K_{\frac{n+2\gamma}{2}}\lp |H|s^{\frac{1}{2}}\rp, $$
with $K_\nu$ given by Lemma \ref{lemma-Bessel}.

On the other hand, a simple computation shows that the Laplace transform of  $(1+\langle \alpha, H\rangle+t)^{\frac{m_\alpha+m_{2\alpha}}{2}-1}$ is given by
$$\frac{e^{s+1+\langle \alpha, H\rangle}}{s^{\frac{m_\alpha+m_{2\alpha}}{2}-1}} \Gamma\lp\frac{m_\alpha+m_{2\alpha}}{2}\rp.$$
Since
 $$\calL(gh)(s)=\frac{1}{2\pi i}\lim_{T\to \infty}\int_{c-iT}^{c+iT}\calL(g)(\sigma)\calL(h)(\sigma-s)d\sigma,$$ where the integration is done along the vertical line $Re(z)=c$. We denote this convolution type of operation as  $\calL(g)\tilde{\star} \calL(h)$.

 Then, by denoting as $q_i=\frac{m_{\alpha_i}+m_{2\alpha_i}}{2}$ and $l$ the total number of roots in $\Sigma_0^+$,
 we can compute the desired Kernel as
$$\mathcal K_\gamma(e^H)=\left. \frac{e^{s+1+\langle \alpha_1, H\rangle}}{s^{q_1-1}} \Gamma\lp q_1\rp \tilde{\star}\ldots  \tilde{\star}\frac{e^{s+1+\langle \alpha_{l}, H\rangle}}{s^{q_{l}-1}} \Gamma\lp q_{l}\rp\tilde{\star} C_{n,\gamma}s^{\frac{n+2\gamma}{4}} K_{\frac{n+2\gamma}{2}}\lp |H|s^{\frac{1}{2}}\rp\right|_{s= |\varrho|^2}$$

%Plugging the approximation \eqref{approxhyperheat} of $p_t$ in Theorem \ref{thm-ST2} we need to compute
%$$\int_0^\infty   \frac{(1+\rho)(1+\rho+t)^{\frac{n-3}{2}}}{t^\frac n2+1+\gamma}e^{-\frac{(n-1)^2}{4}t-\frac{n-1}{2}\rho-\frac{\rho^2}{4t}}dt.$$ This is equivalent to obtaining the Laplace transform of the function
%$ \frac{(1+\rho)(1+\rho+t)^{\frac{n-3}{2}}}{t^\frac n2+1+\gamma}e^{-\frac{n-1}{2}\rho-\frac{\rho^2}{4t}}$ at $\frac{(n-1)^2}{4}$ and this can be done using a similar procedure to the one that we used to compute the kernel in hyperbolic space (obtain an equation, transform it and solve.)

\subsubsection{Geometrically finite hyperbolic manifolds}\label{Geometrically finite hyperbolic manifolds}

The following introduction is standard and we refer the reader to \cite{Bowditch,Ratcliffe}. Let $\Gamma$ be a discrete group of isometries of $\mathbb H^n$, that without loss of generality can be taken to be torsion-free. Then the quotient $M=\mathbb H^n / \Gamma$ is a smooth manifold which inherits a complete hyperbolic structure. If $x\in\mathbb H^n$, the set of accumulation points of the orbit $x\Gamma$ in $\overline {\mathbb H^n}$  is a closed subset $\Lambda(\Gamma)\subset \ms^{n-1}$ called the limit set of $\Gamma$. Its complement $\Omega(\Gamma)=\ms^{n-1}\backslash \Lambda(\Gamma)$ is called the domain of discontinuity and $\Gamma$ acts properly discontinuously in $\Omega(\Gamma)$.

We assume, in addition, that $\Gamma$ is \emph{geometrically finite}, i.e., it admits a fundamental domain with finitely many sides, and we consider those groups for which $M$ has infinite volume. In the case that no parabolic subgroup involves irrational rotations, which is the setting of \cite{Perry:kleinian-groups}  for the study of the resolvent of the Laplacian $\Delta_M$ (see also \cite{Guillarmou-Mazzeo} for more general admissible groups), it is easy to give a geometrical description. In fact, there exists a compact $K$ of $M$ such that $M\backslash K$ is covered by a finite number of charts isometric to either a regular neighborhood $(M_0,g_0)$, where
$$M_0=\{(x_1,x_2)\in(0,\infty)\times \mathbb R^{n-1} \,:\, x_1^2+|x_2|^2<1\},\quad g_0=(x_1)^{-2}(dx_1^2+dx_2^2),$$
or a rank-$r$ \emph{cusp} neighborhood $(M_r,g_r)$, $1\leq r\leq n-1$, where
$$M_r=\{(x_1,x_2,x_3)\in(0,\infty)\times \mathbb R^{n-1-r}\times T^r \,:\, x_1^2+|x_2|^2>1\},\,\, g_r=(x_1)^{-2}(dx_1^2+dx_2^2+dx_3^2),$$
for $r<n-1$ and
$$M_{n-1}=\{(x_1,x_3)\in(0,\infty)\times T^n \,:\, x_1>1\},\quad g_{n-1}=(x_1)^{-2}(dx_1^2+dx_3^2).$$
Here $(T^k,dx_3^2)$ is a $k$-dimensional flat torus.

%Then any $h\in\Gamma$ has either one or two fixed points on $\ms^n$; in the former case it is called parabolic, and in the latter, hyperbolic (or loxodromic). In either case, the fixed point set of any $h\in\Gamma$ lies in $\Lambda(\Omega)$. Without being very precise in the following definition, if $p\in\Lambda(\Gamma)$ is the fixed point of a parabolic element, then there exists a neighborhood of $p$ in $M$ isometric to a rank-$k$ \emph{cusp neighborhood}
%Note that there are several definitions of geometrically finiteness. In the case that , which are shown to be equivalent in \cite{Bowditch})
%In summary, a geometrically finite hyperbolic quotient is a quotient $\mathbb H^n/\Gamma$ by a discrete subgroup $\Gamma$ such that $\mathbb H^{n}\cup \Omega(\Gamma)$ has a decomposition into the union of a compact set $K$ and a finite number of standard cusp regions. This is more general than the classical condition that there exists a convex finite-sided fundamental domain of $\Gamma$, although these conditions are equivalent when $n\leq 4$ or if all $\gamma\in \Gamma$ are rational.

When such $\Gamma$ has no parabolic elements, then both $\Gamma$ and the quotient $\mathbb H^n/\Gamma$ are called \emph{convex co-compact}, and the quotient manifold $M$ has no cusp then.

We define $\delta(\Gamma)$, the exponent of convergence of the Poincar\'e series, by
$$\delta(\Gamma)=\inf\left\{ s>0 \,:\, \sum_{h\in\Gamma}d_{\mathbb H}(x,hx')^{-s}<\infty\right\},$$
where $x,x'\in\mathbb H^n$. Note that it depends upon the group $\Gamma$ but not upon the choice of $x,x'$. It is known that $0\leq \delta(\Gamma)\leq n-1$. We also define $\mu_\alpha$, for $\alpha>0$, by
$$\mu_\alpha(x)=\left\{\sum_{h\in\Gamma} e^{-\alpha d_{\mathbb H^n}(x,hx)}\right\}^{1/2},$$
noting that the sum is invariant under the action of $\Gamma$ on $x$, so that $\mu_\alpha$ can also be regarded as a function on $\mathbb H^n/\Gamma$. Although the series converges for all $\alpha>\delta(\Gamma)$, we shall often assume that $\alpha>n-1$, because $\mu_\alpha$ is both smaller and easier to estimate for larger $\alpha$. We note that the distance function $\tilde \rho$ on $\mathbb H^n/\Gamma$ is given by
$$\tilde \rho(x,x')=\min_{h\in \Gamma} d_{\mathbb H^n}(hx,x').$$

We recall the following bounds for the heat kernel on $\mathbb H^n/\Gamma$ from \cite{Davies-Mandouvalos:kleinian}. For $t\in(0,\infty)$ and any $\epsilon>0$:
\begin{enumerate}
\item[\emph{i.}] If $0\leq \delta(\Gamma)<\frac{n-1}{2}$, then
$$0\leq \tilde p_t(x,x')\leq c_\epsilon t^{-\frac{n}{2}}e^{-\lp (n-1)^2/4-2\epsilon\rp t} e^{-\frac{\tilde\rho^2}{4(1+\epsilon)t}}\mu_\alpha(x)\mu_\alpha(x').$$
\item{\emph{ii.}} If $\frac{n-1}{2}\leq \delta(\Gamma)\leq n-1$ and $\alpha<\delta(\Gamma)$,
$$p_t(x,x')\leq c_\epsilon t^{-\frac{n}{2}}e^{-\lp [\delta(\Gamma)(n-1-\delta(\Gamma))]-2\epsilon\rp t} e^{-\frac{\tilde \rho^2}{4(1+\epsilon)t}}\mu_\alpha(x)\mu_\alpha(x').$$
\end{enumerate}
Moreover, if $\alpha>n-1$, then $\mu_\alpha(x)\to 1$ as $x$ approaches to an end, and $\mu_\alpha(x)\sim e^{\frac{1}{2}\rho(x,z)r}$ as $x$ approaches a cusp of rank $r$, where $z$ is any point in $\mathbb H^n$. In particular, $\mu_\alpha(x)$ is bounded if $\alpha>n-1$ and the manifold has no cusps.

In order to obtain $L^\infty$ estimates for the heat kernel near a cusp of rank $r$, we make the following observation:
$$e^{-\frac{\rho^2}{4(1+\epsilon)t}}e^{\frac{r}{2}\rho}=e^{-\lp \frac{\rho}{2\sqrt{(1+\epsilon)t}}-\frac{r\sqrt{(1+\epsilon)t}}{2}\rp^2}e^{\frac{r^2(1+\epsilon)}{4}t}.$$
We get good $L^\infty$ bounds for $p_t(x,\cdot)$ when:
\begin{enumerate}
\item[\emph{i.}]   $0\leq \delta(\Gamma)<\frac{n-1}{2}$,   $r<n-1$ and there is no maximal cusp. Or,
\item[\emph{ii.}]  $\delta(\Gamma)=\frac{n-1}{2}+\frac{\beta}{2}$ for some $\beta\in[0,n-1)$ and
$r<(n-1)^2-\beta^2.$
\end{enumerate}
Alternatively, we can obtain good bounds for the heat kernel if the manifold has no cusp.

%It becomes clear then, for a quotient $\mathbb H^n/\Gamma$ without cusps, the fractional Laplacian may be defined through our method. In \cite{Guillarmou-Mazzeo} they authors have carefully studied the resolvent of the Laplacian in the presence of cusps. We expect that obtaining more precise upper bounds for the heat kernel in this case will be a very related problem that will hopefully be considered in future work.

\subsubsection{Rotationally symmetric manifolds}\label{Rotationally symmetric manifolds}

We consider a noncompact rotationally symmetric manifold with a pole at the origin, i.e. a manifold $M^n$ given by the metric
\begin{equation}
\label{metric}g_M=dr^2+\phi^2(r)\,d\omega^2,
\end{equation}
where $d\omega^2$ is the metric on $\mathbb S^{n-1}$, and $\phi$ is a $\mathcal C^\infty$ nonnegative function on $[0,\infty)$, strictly positive on $(0,\infty)$, such that $\phi^{(\text{even})}(0)=0$ and $\phi'(0)=1$. These conditions on $\phi$ ensure us that the manifold is
smooth (see section�$\,\S$1.3.4. of \cite{Petersen}). The volume element is $\phi^{n-1}(r)\,dr\,d\omega$, and the Laplace-Beltrami operator on $M$ is
$$\Delta_M = \partial_{rr}+(n-1)\frac{\phi'(r)}{\phi(r)}\partial_r+\frac{1}{\phi^2(r)}\Delta_{\mathbb S^{n-1}}.$$

For such manifold, the curvature of $M$ can be computed explicitly in terms of $\phi$ (see $\S$3.2.3 of \cite{Petersen}).
Indeed, there exists an orthonormal frame $(F_j)_{j=1}^n$ on $(M, g)$, where $F_n$ corresponds to the radial coordinate, and $F_1,\ldots,F_{n-1}$ to the spherical coordinates, for which $F_i\wedge F_j$ diagonalize the curvature operator $\mathcal R$ :
\begin{equation*}
\begin{split}
&\mathcal R(F_i\wedge F_n)=-\frac{\phi''}{\phi}F_i\wedge F_n,\quad i<n, \\
&\mathcal R(F_i\wedge F_j)=-\frac{(\phi')^2-1}{\phi^2} F_i\wedge F_j,\quad i,j<n,
\end{split}
\end{equation*}
which gives the sectional curvature
\begin{equation*}
\begin{split}
K_{ii}&=0,  \quad i=1,\ldots,n,\\
K_{ni}&=K_{in}=-\frac{\phi''}{\phi},\quad i=1,\ldots,n-1,\\
K_{ij}&=-\frac{(\phi')^2-1}{\phi^2}, \quad i,j=1,\ldots,n-1, i\neq j.
\end{split}
\end{equation*}
The Ricci curvature is then calculated as
\begin{equation*}
\begin{split}
Ric(F_i)&=-\lp(n-2)\frac{(\phi')^2-1}{\phi^2}+\frac{\phi''}{\phi}\rp F_i,\quad i<n,\\
Ric(F_n)&=-(n-1)\frac{\phi''}{\phi}F_n,
\end{split}
\end{equation*}
We denote by $S(r)$ and $B(r)$, $r>0$, the geodesic sphere and ball with center 0 and radius $r$, respectively. The volume of $B(r)$ and the area of $S(r)$ are calculated from
$$|B(r)|=\omega_n \int_0^r \phi(s)^{n-1}\,ds,\quad |S(r)|=\omega_n \phi(r)^{n-1},$$
where $\omega_n$ is the area of the standard unit sphere $\ms^{n-1}$.

%and the scalar curvature is  $$R=-2(n-1)\frac{\phi''}{\phi}-(n-1)(n-2)\frac{(\phi')^2-1}{\phi^2}.$$

In the particular case of Euclidean space, $\phi(r)=r$, while for hyperbolic space, $\phi(r)=\sinh r$. Typical examples are $\phi(r)=r+\beta r^\alpha$ for some constants $\alpha\geq 2$ and $\beta>0$, and
$$\phi(r)=\sum_{k=0}^n\frac{r^{(2k+1)}}{k!}$$ for some $n\in\mathbb R^n$, which serve as an interpolation space between Euclidean space and hyperbolic space. Their sectional curvature is non-positive and Ricci curvature is bounded below by a negative constant.

Rotationally symmetric manifolds are important in the sense that they serve as models for more general problems (see, for instance, \cite{Greene-Wu} for the study of manifolds having a pole, \cite{Hsu} for Brownian motion and probability aspects). However, we have not found in the literature a clean if and only if condition for having suitable heat kernel upper bounds (see \cite{Grigoryan-SaloffCoste1} for examples in this direction).

We define the ratios
$$f_1(r)=\frac{\phi''(r)}{\phi(r)},\quad f_2(r)=(n-2)\frac{(\phi')^2-1}{\phi^2}+\frac{\phi''}{\phi}.$$
Recalling Proposition \ref{prop:summary}, we are in a good situation if, for instance: both $f_1,f_2$ are uniformly upper-bounded and the volume control \eqref{condition-balls} holds.

%A bound for the injectivity radius can be found in \cite{Hempel-Post-Weder}, based on the work in \cite{Muller-Salomonsen}, where they require a local bound on the variation of $\phi$. More precisely, let $\phi$ satisfy the condition
%$$\frac{1}{C_0}\phi(r_0)\leq \phi(r)\leq C_0 \phi(r_0),\quad r\in[r_0-2,r_0+2],$$
%for all $r_0>2$, where $C_0\geq 1$ is a constant. Then the injectivity radius of $M$ satisfies the %lower bound at the point $(r,\omega)\in M$
%$$i_M (r)\geq C \min\left\{\{\frac{1}{\sqrt{\kappa(r)}},\phi(r)\right\}\},\quad r\geq 2,$$
%where $C>0$ is a constant that is independent of $r$, and
%$$\kappa(r):=\max_{|r-s|\leq 2}\max \left\{ %\frac{\phi''(s)}{\phi(s)},\frac{1+\phi'(s)^2}{\phi(s)^2},1\right\}.$$
%Note that this definition is written for $n\geq 3$, for $n=2$ one needs to drop to middle term in %the definition of $\kappa$.

%Now assume that we only have a Ricci bound from below. In the case that $Ric\geq 0$, one may use \eqref{upper-heat1}, while if $Ric\geq -(n-1)\beta^2$ for some $\beta$ we use  \eqref{upper-heat2}.

%In any case, we need additional volume growth control \eqref{condition-balls}.
One may compare the volume of any ball to the volume of a ball centered at the origin using \eqref{comparison}.
First, if $1\geq 2d_M(x,0)$, then
\begin{equation*}|B(0,1)|\leq |B(0,1-d_M(x,0))|\frac{|B_\beta(1)|}{\,|B_\beta(1-d_M(x,0))|\,}\leq C_n |B(x,1)|.\end{equation*}
On the other hand, if $1<2d_M(x,0)$, then
\begin{equation*}|B(0,1)|\leq |B(0,1 /4)| \frac{|B_\beta(1)|}{|\,B_\beta(1/4)\,|}\leq C_n |B(x,d_M(x,0)+1/4)|,\end{equation*}
and if we use comparison again,
\begin{equation*}|B(x,d_M(x,0)+1/4)|\leq |B(x,1/4)|\frac{|B_\beta(d_M(x,0)+1/4)|}{|B_\beta (1/4)|}\leq C_n |B (x,1)| \lp 1+d_M(x,0)\rp^n\end{equation*}
Therefore
\begin{equation}\label{compbis}
\frac 1{|B(x,1)|}\leq \frac 1{|B(0,1)|}\,C_n\, \lp 1+d_M(x,0)\rp^{n},
\end{equation}
so \eqref{condition-balls} is fulfilled. Summarizing we have obtained:

\begin{prop}
Let $M$ be a (noncompact) rotationally symmetric manifold with metric \eqref{metric}, with $\phi$ as given at the beginning of the section. Then $M$ satisfies condition \eqref{hypo3} in Theorem \ref{thm-ST2} if $f_1(r),f_2(r)$ are uniformly bounded from above for $r\in[0,\infty)$.
%either:
%\begin{itemize}
%\item Both ratios $f_1,f_2\geq 0$,
%\item $f_1,f_3$ are non-positive, or
%\item $f_1,f_3$ are bounded from above for $r\in[0,\infty)$.
% and $\lambda_1>0$.
%\end{itemize}
\end{prop}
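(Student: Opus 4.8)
The plan is to verify the two hypotheses appearing in the second bullet of Proposition \ref{prop:summary}---a lower Ricci bound $Ric\geq -(n-1)\beta^2$ and the volume control \eqref{condition-balls}---and then to invoke that proposition directly, since it already packages the passage from such bounds to condition \eqref{hypo3} in Theorem \ref{thm-ST2}, via the Davies estimate \eqref{upper-heat2} and the reductions of Section \ref{heatestimates}. Thus the real work is geometric, and most of the argument is bookkeeping once the two bounds are secured.

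The lower Ricci bound can be read off directly from the curvature computation recorded above. In the orthonormal frame $(F_j)$ the Ricci tensor is diagonal, with eigenvalue $-(n-1)f_1(r)$ in the radial direction $F_n$ and eigenvalue $-f_2(r)$, with multiplicity $n-1$, in the spherical directions. Consequently, if $f_1(r),f_2(r)\leq K$ for every $r\in[0,\infty)$, then every eigenvalue of $Ric$ is bounded below by $-(n-1)\beta^2$ as soon as we pick $\beta^2:=\max\{K,1\}$, which is positive since $n\geq 2$ forces $f_1\leq \beta^2$ and $f_2\leq (n-1)\beta^2$. This is the only point at which the hypothesis on $f_1,f_2$ enters, and the extraction is purely algebraic.

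With the Ricci bound in hand, Bishop's comparison \eqref{comparison} becomes available against the space form of curvature $-\beta^2$, and I would run exactly the chain of inequalities culminating in \eqref{compbis}, comparing an arbitrary small ball centered at $x$ with the pole ball $B(0,1)$, which establishes the volume control \eqref{condition-balls}. The way this estimate is actually used is transparent from Section \ref{heatestimates}: for fixed $x$ one has, by \eqref{formula50}, the identity $\|p_t(x,\cdot)\|_{L^2}^2=p_{2t}(x,x)$, while the gradient inequality quoted there reduces $\|\partial_t p_t(x,\cdot)\|_{L^2}$ to a comparable on-diagonal quantity; feeding $x'=x$ into \eqref{upper-heat2} together with the comparison consequence $|B(x,\sqrt t)|^{-1}\leq C\,|B(x,1)|^{-1}t^{-n/2}$ then yields the small-time growth required in \eqref{hypo3}, with a constant depending on the fixed positive number $|B(x,1)|$, and the reduction to small times was explained right after \eqref{formula50}. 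The main obstacle is precisely this geometric step---turning the pointwise bounds on $f_1,f_2$ into a serviceable control of the volume of small balls through Bishop's theorem and the explicit model volumes $J_\beta$---after which Proposition \ref{prop:summary} closes the argument.
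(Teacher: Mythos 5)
Your proposal is correct and follows essentially the same route as the paper: you read the lower Ricci bound $Ric\geq -(n-1)\beta^2$ off the displayed Ricci eigenvalues $-(n-1)f_1$ and $-f_2$, run Bishop's comparison \eqref{comparison} exactly as in the chain of inequalities leading to \eqref{compbis}, and then close the argument via Proposition \ref{prop:summary}, the Davies bound \eqref{upper-heat2}, and the reductions of Section~\ref{heatestimates}. Your explicit remark that the constant $C_x$ in \eqref{hypo3} may depend on $x$, so that the distance-dependent estimate \eqref{compbis} suffices even though it is not a uniform lower bound on $|B(x,1)|$, is precisely the reading the paper intends.
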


%\begin{rem}As we have mentioned, the condition on $\lambda_1$ is satisfied if, for instance, \eqref{condition-Murata} holds.\end{rem}

\begin{rem}
Under conditions on the sectional curvature,
estimates have been obtained also on the Schrodinger operator
$e^{it\Delta_M}$ in \cite{Banica-Duyckaerts}.
\end{rem}

\begin{rem}\label{rotsym}In the case of rotationally symmetric manifolds there is an equivalent way to write the extension problem: we define the weight
$$w(r)=\left(\frac{r}{\phi(r)}\right)^\frac{n-1}{2},$$
 that provides an isometry between $L^2(M)$ and $L^2(\mathbb R^n)$. Then one has the conjugation formula
$$\Delta_M h= w\,L\,(w^{-1}h),$$
with
$$L=\partial_{rr}-\frac{n-1}{r}\partial_r+\frac{1}{\phi^2}\Delta_{\mathbb S^{n-1}}-V,$$
and $V$ the radial function  (see for instance \cite{Banica-Duyckaerts}),
$$V(r)=\frac{n-1}{2}\frac{\phi''}{\phi}-\frac{(n-1)(n-3)}{4}
\left(\left(\frac{\phi'}{\phi}\right)^2-\frac {1}{r^2}\right).$$
On one hand we obtain, if the operators are positive (which is the case if for instance $V$ is nonnegative),
$$(-\Delta_M)^\gamma h= w\,\left(L\right)^\gamma\,(w^{-1}h).$$
On the other hand, by the change of function $v(r,\omega,y)=w(r)\, u(r,\omega,y)$,
the system \eqref{extension1} is transformed into
\begin{equation}\label{extpotential}
\left\{\begin{split}
&\partial_{yy} u + \frac{a}{y}\, \partial_y u +L\,u\,(x,y)=0 \hbox{ for } (x,y)\in \mathbb R^n\times \rr_+,\\
&u(x,0)=w^{-1}(|x|)f(x) \qquad \hbox{ for } x\in \mathbb R^n.
\end{split}\right.
\end{equation}
We apply again the results of \cite{StingaTorrea}, this time for the operator $L$ acting on functions on $\mathbb R^n$, and obtain the existence of the solution of \eqref{extpotential} (and implicitly of \eqref{extension}), together with the limit
\begin{equation}\label{Neumannpotential}\left(L\right)^\gamma (w^{-1}f)=-d_{\gamma}\lim_{y\to 0}y^a \,\partial_y u,\end{equation}
In view of the definition of $u$ we get \eqref{Neumann}:
$$-d_{\gamma}\lim_{y\to 0}y^a \,\partial_y v= -w \,d_{\gamma}\lim_{y\to 0}y^a \,\partial_y u=w\left(L\right)^\gamma (w^{-1}f)=(-\Delta_M)^\gamma f.$$
Finally, taking Fourier transform on $\mathbb R^n$, \eqref{extpotential} gets transformed into
$$\partial_{yy} \hat u + \frac{a}{y}\, \partial_y \hat u -\lambda^2\,\hat u-\hat V\star\hat u = 0,$$
which, a priori, may not be explicitly solved, but it may give further information.
\end{rem}

Rotationally symmetric manifolds are toy models of warped products and one could continue this study further. We refer the reader to \cite{Grigoryan:weighted}.

%\subsubsection{Warped products}
%We consider $(N,h)$ a $(n-1)$-Riemannian manifold and $I\subset \mathbb R$ an open interval, and %we define a warped product manifold $I\times N$ endowed with the metric
%$$g=dr^2+\phi^2(r) h,$$
%where $\phi>0$ on $I$ (see \cite{Petersen}). The Laplace operator is
%$$\partial_{rr}+(n-1)\frac{\phi'(r)}{\phi(r)}\partial_r+\frac{1}{\phi^2(r)}\Delta_{h}.$$
%If we consider functions radial in the $r$ variable, the extension problem can be treated the same as for rotationally symmetric manifolds, as in Remark \ref{rotsym}.\\

%\textcolor{magenta}{In page 3 of \cite{Munteanu-Wang}, a lower bound for the ball volume is quoted from \cite{Li-Wang}. Looks that this holds for manifolds that are for instance $\mathbb R\times N$ with $N$ a compact manifold with Ricci curvature bounded from below by $-(n-2)$. However, I am lost about ends of manifolds in \cite{Li-Wang} ; I put both paper in Dropbox, maybe all this is familiar for you.}

%%%%%%%%%%%%%%%%%%%%%%%%%%%%%%%%%%%%%%%%%%%%%%%%%%%%%%%%%%%%%%%%%%%%%%%%%%%%%%%%%%%%%%%%%%%%%

\subsubsection{Manifolds with ends}\label{Manifolds with ends}

Let us consider first the case that $M$ is topologically of the form $X\times (0,\infty)$, where the manifold $X$ need not be compact; note that the extension to several cusps is straightforward. Assume that $X$ has dimension $N\geq 2$ and carries a metric $g_X$. Define the metric on $M$ as
$$g_M=\gamma(x,r)(g_X+dr^2),\quad x\in M,r\in(0,\infty).$$
In addition, we assume that $M$ is approximately hyperbolic in the sense that $\gamma$ is a positive $\mathcal C^\infty$ function which satisfies
$$c^{-1} r^{-2} \leq \gamma(x,r)\leq c r^{-2}, \quad |\partial_r\gamma|\leq c r^{-3} \quad \text{on }M $$
for some $c>0$. The Laplace-Beltrami operator is written as
$$\Delta_M=\gamma^{-\frac{N+1}{2}}\divergence_x (\gamma^{\frac{N-1}{2}}\nabla_x)
+\gamma^{-\frac{N+1}{2}}\partial_r (\gamma^{\frac{N-1}{2}}\partial_r).$$
It is shown in \cite{Davies-Mandouvalos:cusps} (see also \cite{Muller:cusps}) that if the heat kernel of $X$ satisfies the bound
$$0\leq p_t(x,x')\leq c t^{-N/2},\quad \mbox{for all }0< t\leq 1,\; x,x'\in X,$$
which is the case when $X$ is compact, then the heat kernel of $M$ satisfies
$$0\leq p_t(m,m')\leq c_\delta (1+r)^{N/2} (1+r')^{N/2}t^{-\frac{N+1}{2}} e^{(2\delta-\lambda_1)t}e^{-\frac{d_M(m,m')^2}{4(1+\delta)t}},$$
for all $0<t<\infty$, $m,m'\in M$, where $\lambda_1$ is the bottom of the spectrum of $-\Delta_M$. Since one can get as for \eqref{compbis} the estimate
\begin{equation*}
\frac 1{|B(x,\sqrt{t})|}\leq \frac 1{|B(0,\sqrt{t})|}\,C_n\, \lp 1+\frac{d_M(x,0)}{\sqrt{t}}\rp^{n},
\end{equation*}
plugging it into \eqref{upper-heat2} hat we can satisfy condition \eqref{hypo3} in Theorem \ref{thm-ST2}.\\

%It is clear then, by using \eqref{comparison}
%the same arguments used to control the space terms in \eqref{formula60},
%that we can satisfy condition \eqref{hypo3} in Theorem \ref{thm-ST2}.\\
% when $\lambda_1>0$

More generally, one may consider weighted complete manifolds of the form
$$M=M_1\#\ldots\#M_k,$$
that is, manifolds that are the connected sum of a finite number of manifolds $M_i$, $1\leq i\leq k$. More precisely, this means that $M$ is the disjoint union $M=K\cup E_1\cup\ldots\cup E_k$, where $K$ is a compact with smooth boundary (we refer to it as the central part) and each $E_i$ is isometric to the complement of a compact set $K_i$ with smooth boundary in $M_i$. If $M$ is weighted then we assume that the $M_i$'s are weighted. The weight on $M$ and the weight on $M_i$ coincide on $E_i$ (with the obvious identification). The goal of  \cite{Grigoryan-SaloffCoste} is to study heat kernel bounds for $M$ from the bounds on each $M_i$ through a gluing technique. In order to keep the presentation simple, we will not state their theorem in full generality, but explain the model cases that inspire it.

For an integer $m\in[1,N]$ we define the manifold $\mathcal R^m$ by
$$\mathcal R^1=\mathbb R_+\times \mathbb S^{N-1},\quad \mathcal R^m=\mathbb R^m\times \mathbb S^{N-m},\,m\geq 2.$$
The manifold $\mathcal R^m$ has topological dimension $N$ but its ``dimension at infinity" is $m$ in the sense that $V(x,r)\sim r^m$ for $r\geq 1$. This enables to consider finite connected sums
$M=\mathcal R^{N_1}\# \ldots \#\mathcal R^{N_k}$, for fixed $M$ and $k$ integers $N_1,N_2,\ldots,N_k\in[1,N]$.

We assume that all ends of $M$ are non-parabolic, i.e., each $N_i> 2$, and set
$$n:=\min_{1\leq i\leq k} N_i (>2).$$
Let $K$ be the central part of $M$ and $E_1,\ldots,E_k$ be the ends of $M$ so that $E_i$ is isometric to the complement of a compact set in $\mathcal R^{N_i}$. With some abuse of notation, we write $E_i=\mathcal R^{N_i}\backslash K$. For any point in $x\in M$, set
$$|x|:=\sup_{x'\in K} d(x,x').$$
Observe that since $K$ has non-empty interior, $x$ is separated from 0 on $M$ and $|x|\sim 1+d(x,K)$.

For instance, let $k=2$ (i.e., $M$ has two ends). Set $M=\mathcal R^n\#\mathcal R^m$, $2<n\leq m$. Assume that
$x\in\mathcal R^n\backslash K$, $x'\in\mathcal R^m\backslash K$, $t\geq t_0$.
Then we have the heat kernel bound
$$0\leq p_t(x,x')\leq C \lp \frac{1}{t^{m/2}|x|^{n-2}}+\frac{1}{t^{n/2}|x'|^{m-2}}\rp e^{-c\frac{d^2}{t}},$$
which is enough for our purposes.

\subsection{Other frameworks}

One may also consider the construction of the fractional Laplacian on metric graphs. Note that heat kernel estimates are also valid for the Laplacian on graphs, see for instance the recent studies \cite{Frank-Kovarik}, \cite{Keller-Lenz-Wojciechowski} and references therein as \cite{Urakawa:infinite-graphs}.

Finally, it would be interesting to construct real and complex powers of the complex Laplacian on K\"ahler manifolds.

%%%%%%%%%%%%%%%%%%%%%%%%%%%%%%%%%%%%%%%%%%%%%%%%%%%%%%%%%%%%%%%%%%%%%%%%%%%%%%%%%%%%%%%%%%%

%\textbf{Acknowledgements: } UPC,Paris

%%%%%%%%%%%%%%%%%%%%%%%%%%%%%%%%%%%%%%%%%%%%%%%%%%%%%%%%%%%%%%%%%%%%%%%%%%%%%%%%%%%%%%%%

\end{document}